\colorlet{darkishRed}{red!80!black}
\colorlet{darkishBlue}{blue!60!black}
\colorlet{darkishGreen}{green!60!black}
\newcommand{\SCnumberCite}[1]
{%
\ifthenelse{\equal{#1}{1}}{\cite{StarComb1StarsAndCombs}}{}%
\ifthenelse{\equal{#1}{2}}{\cite{StarComb2TheDominatedComb}}{}%
\ifthenelse{\equal{#1}{3}}{\cite{StarComb3TheUndominatedComb}}{}%
\ifthenelse{\equal{#1}{4}}{\cite{StarComb4TheUndominatingStar}}{}%
}
\newcommand{\SCnumberHand}[2]
{%
\ifthenelse{\equal{#1}{#2}}{\ding{43}\,}{}%
}
\newcommand{\SCintroList}[2]
{%
    \ifthenelse{\equal{#1}{#2}}{(this paper)}{\SCnumberCite{#1}}%
}
\newcommand{\SCintroDetermined}[1]
{%
    \ifthenelse{\equal{#1}{1}}{In this paper, we determine}{In the first paper of this series, we determined}%
}
\newcounter{quotecount}
\newcommand*{\addFileDependency}[1]{
  \typeout{(#1)}
  \@addtofilelist{#1}
  \IfFileExists{#1}{}{\typeout{No file #1.}}
}
\newcommand{\Abs}[1]{\partial_{\Omega} {#1}}
\newcommand{\rest}{\upharpoonright}
\newcommand{\im}{\normalfont\text{im}}
\renewcommand{\subset}{\subseteq}
\renewcommand{\supset}{\supseteq}
\newcommand{\at}{attached to }
\newcommand{\parentheses}[1]{{\left( {#1} \right)}}
\newcommand{\of}{\parentheses}
\newcommand{\dc}[1]{\lceil #1\rceil}
\newcommand{\nt}{T_{\textup{\textsc{nt}}}}
\def\calCommandfactory#1{%
   \expandafter\def\csname c#1\endcsname{\mathcal{#1}}}
\def\frakCommandfactory#1{%
   \expandafter\def\csname frak#1\endcsname{\mathfrak{#1}}}
\newcounter{ctr}
  \edef\X{\@Alph\c@ctr}
  \edef\Y{\@alph\c@ctr}
\def\lowfwd #1#2#3{{\mathop{\kern0pt #1}\limits^{\kern#2pt\raise.#3ex
\vbox to 0pt{\hbox{$\scriptscriptstyle\rightarrow$}\vss}}}}
\def\lowbkwd #1#2#3{{\mathop{\kern0pt #1}\limits^{\kern#2pt\raise.#3ex
\vbox to 0pt{\hbox{$\scriptscriptstyle\leftarrow$}\vss}}}}
\def\ve{\kern-1.5pt\lowfwd e{1.5}2\kern-1pt}
\def\ev{\kern-1pt\lowbkwd e{0.5}2\kern-1pt}
\def\vf{\kern-2pt\lowfwd f{2.5}2\kern-1pt}
\newtheorem{theorem}{Theorem}[section] 
\newtheorem{lemma}[theorem]{Lemma}
\newtheorem{problem}[theorem]{Problem}
\newenvironment{customthm}[1]
  {\innercustomthm}
  {\endinnercustomthm}
\theoremstyle{definition}
\theoremstyle{remark}
\newcommand{\ntrank}{normal rank}
\newcommand{\rlt}{T_{\textsc{rl}}}
\newcommand{\ntr}{normally traceable}
\newcommand{\itr}{$\cI$-traceable}
\newcommand{\itrank}{$\cI$-rank}
\begin{document}
\title[End-faithful spanning trees in graphs without normal spanning trees]{End-faithful spanning trees in graphs\\without normal spanning trees}

\author{Carl Bürger}
\author{Jan Kurkofka}
\address{\hspace*{-\parindent}University of Hamburg, Department of Mathematics, Bundesstraße 55 (Geomatikum), 20146 Hamburg, Germany}
\email{carl.buerger@uni-hamburg.de, jan.kurkofka@uni-hamburg.de}

\keywords{infinite graph; end-faithful spanning tree; normal spanning tree; tree-decomposition; rayless spanning tree; normal rank; normally traceable}

\@namedef{subjclassname@2020}{\textup{2020} Mathematics Subject Classification}
\subjclass[2020]{05C63, 05C05, 05C75, 05C78, 05C83}

\begin{abstract}
Schmidt characterised the class of rayless graphs by an ordinal rank function, which makes it possible to prove statements about rayless graphs by transfinite induction.
Halin asked whether Schmidt's rank function can be generalised to characterise other important classes of graphs. 
We answer Halin's question in the affirmative. 


Another largely open problem raised by Halin asks for a characterisation of the class of graphs with an end-faithful spanning tree. 
A well-studied subclass is formed by the graphs with a normal spanning tree.
We determine a larger subclass,
the class of \ntr\  graphs, which consists of the connected graphs with a rayless tree-decomposition into normally spanned parts.
Investigating the class of \ntr\ graphs further we prove that, for every \ntr\ graph, having a rayless spanning tree is equivalent to all its ends being dominated. 
Our proofs rely on a characterisation of the class of \ntr\ graphs by an ordinal rank function that we provide. 
\end{abstract}

\vspace*{-1.2cm}
\maketitle


\vspace*{-.8cm}
\section{Introduction}
\noindent Schmidt~\cite{DiestelBook5,Schmidt1983} characterised the class of rayless graphs by an ordinal rank function, which makes it possible to prove statements about rayless graphs by transfinite induction.
For example, Bruhn, Diestel, Georgakopoulos and Sprüssel~\cite{UnfriendlyPartition,DiestelBook5} proved the unfriendly partition conjecture for the class of rayless graphs in this way.
At the turn of the millennium, Halin~\cite{halin2000} asked in his legacy collection of problems whether Schmidt's rank can be generalised to characterise other important classes of graphs besides the class of
rayless graphs. 
In this paper we answer Halin's question in the affirmative: we characterise an important class of graphs by an ordinal rank function.



Our first main result addresses another largely open problem raised by Halin.
Call a spanning tree $T$ of a graph $G$ \emph{end-faithful} if the natural map \mbox{$\varphi\colon\Omega(T)\to\Omega(G)$} satisfying $\omega\subset\varphi(\omega)$ is bijective. Here, $\Omega(T)$ and $\Omega(G)$ denote the set of ends of $T$ and of $G$, respectively.
Halin~\cite{halin64} conjectured that every connected graph has an end-faithful spanning tree.
However, Seymour and Thomas~\cite{endfaithfulCounterexample} and Tho\-mas\-sen~\cite{ThomassenEndfaithfulCounterexample} constructed uncountable counterexamples; for instance, there exists a connected graph that has precisely one end but all whose spanning trees must contain a subdivision of $T_{\aleph_1}$ (recall that $T_\kappa$ denotes the $\kappa$-branching tree, for a cardinal $\kappa$).
Ever since, it has been an open problem to characterise the class of graphs that admit an end-faithful spanning tree.

Normal spanning trees are important examples of end-faithful spanning trees.
Given a graph $G$, a rooted tree $T\subset G$ is \emph{normal} in $G$ if the endvertices of every $T$--$T$ path in $G$ are comparable in the tree-order of~$T$, cf.~\cite{DiestelBook5}.
Call a set $U$ of vertices of a graph~$G$ \emph{normally spanned} in $G$ if $U$ is contained in a tree $T\subset G$ that is normal in~$G$. 
The graph $G$ is \emph{normally spanned} if $V(G)$ is normally spanned in~$G$, i.e., if $G$ has a normal spanning tree. 
Thus, being normally spanned is a first sufficient condition for the existence of an end-faithful spanning tree, and the normally spanned graphs are well understood: they are characterised by Jung's Theorem~\ref{thm: jung normal tree char}, for example.

A second existence result for end-faithful spanning trees is due to Polat~\cite{PolatEndFaithfulT1Free} and directly addresses the counterexamples by Seymour and Thomas and by Thomassen:  every connected graph that does not contain a subdivision of $T_{\aleph_1}$ has an end-faithful spanning tree.

As our first main result, we determine a new subclass of the class of graphs with an end-faithful spanning tree.
Call a connected graph $G$ \emph{\ntr} if it has a rayless tree-decomposition into parts that are normally spanned in $G$.
For the definition of tree-decompositions see~\cite{DiestelBook5}.

\begin{customthm}{\ref{thm: ntrank implies endfaithful st}}
Every \ntr\ graph has an end-faithful spanning tree.
\end{customthm}

\noindent Our theorem easily extends the two known results regarding sufficient conditions for the existence of end-faithful spanning trees:
On the one hand, every normally spanned graph has a trivial tree-decomposition into one normally spanned part.
On the other hand, every connected graph without a subdivision of $T_{\aleph_1}$ has a rayless tree-decomposition into countable parts by a result of Seymour and Thomas~\cite{ExcludingInfiniteTrees}, Theorem~\ref{thm: Tk}, and countable vertex sets are normally spanned by Jung's Theorem~\ref{thm: jung normal tree char}.

In both cases, the extension is proper:
The \mbox{$\aleph_1$-\emph{branching}} \emph{trees with tops} are the graphs obtained from the rooted $T_{\aleph_1}$ by selecting uncountably many rooted rays and adding for every selected ray $R$ a new vertex, its \emph{top}, and joining it to infinitely many vertices of~$R$~\cite{DiestelLeaderNST}.
Every $T_{\aleph_1}$ with tops has a star-decomposition into normally spanned parts where $T_{\aleph_1}$ forms the central part and each top plus its neighbours forms a leaf's part.
However, not every $T_{\aleph_1}$ with tops has a normal spanning tree~\cite{DiestelLeaderNST,PitzNewNSTobstructions}, and every $T_{\aleph_1}$ with tops contains $T_{\aleph_1}$ as a subgraph.



As our second main result, we extend 
two results on rayless spanning trees.
Recall that a vertex $v$ of a graph $G$ \emph{dominates} a ray $R\subset G$ if there is an infinite $v$--$R$ fan in~$G$. 
An end of $G$ is \emph{dominated} if one (equivalently:~each) of its rays is dominated, see~\cite{DiestelBook5}.
For a connected graph $G$, having a rayless spanning tree is equivalent to all the ends of $G$ being dominated if $G$ is normally spanned~\cite{StarComb3TheUndominatedComb} or if $G$ does not contain a subdivision of $T_{\aleph_1}$~\cite{PolatEndFaithfulT1Free}.
Our second main result extends these results, and any $T_{\aleph_1}$ with all tops witnesses that this extension is proper.

\begin{customthm}{\ref{thm: normally traceable rayless}}
For every \ntr\ graph $G$, having a rayless spanning tree is equivalent to all the ends of $G$ being dominated.
\end{customthm}

Finally, as our third main result we characterise the class of \ntr\ graphs by an ordinal rank function that we call the normal rank:

\begin{customthm}{3}\label{main result: rayless tree}
For every graph $G$ the following assertions are equivalent:
\begin{enumerate}
    \item $G$ is \ntr ;
    \item $G$ has a \ntrank.
\end{enumerate}
\end{customthm}
\noindent We use this in the proofs of all our results on \ntr\ graphs.

This paper is organised as follows.
Section~\ref{section: tools and terminology} provides the tools and terminology that we use throughout this paper.
In Section~\ref{section: ranks and ideals} we show how ideals can be used to define ordinal rank functions, and we structurally characterise the arising classes of graphs.
Then, in Section~\ref{section: rayless tree-decompositions} we introduce the \ntrank\ and prove Theorem~\ref{main result: rayless tree}.
We prove Theorem~\ref{thm: ntrank implies endfaithful st} in Section~\ref{section: end-faithful spanning trees} and we prove Theorem~\ref{thm: normally traceable rayless} in Section~\ref{section: rayless spanning trees}.

\section{Tools and terminology}\label{section: tools and terminology}
\noindent Any graph-theoretic notation not explained here can be found in Diestel's textbook~\cite{DiestelBook5}.

Recall that a \emph{comb} is the union of a ray $R$ (the comb's \emph{spine}) with infinitely many disjoint finite paths, possibly trivial, that have precisely their first vertex on~$R$. 
The last vertices of those paths are the \emph{teeth} of this comb.
Given a vertex set $U$, a \emph{comb attached to} $U$ is a comb with all its teeth in $U$, and a \emph{star attached to}~$U$ is a subdivided infinite star with all its leaves in $U$.

\begin{lemma}[Star-Comb Lemma~{\cite[8.2.2]{DiestelBook5}}]
Let $G$ be any connected graph and let $U\subseteq V(G)$ be infinite.
Then $G$ contains either a comb 
\at $U$ or a star \at $U$.
\end{lemma}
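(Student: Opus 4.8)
The plan is to reduce to the case of a tree and then run a rooted-tree analysis driven by a single dichotomy. Since $G$ is connected it has a spanning tree $T$, and any comb or star attached to $U$ that lives inside $T$ also lives inside $G$; as $V(T)=V(G)\supseteq U$, it suffices to prove the lemma for trees. So I fix a tree $T$ with $U\subseteq V(T)$ infinite, root it at an arbitrary vertex $r$, and for each vertex $t$ let $T_t$ denote the set consisting of $t$ together with all its descendants. Call $t$ \emph{essential} if $T_t\cap U$ is infinite. The root is essential, and the parent of an essential vertex is essential since its down-closure contains $T_t$; hence the essential vertices form a subtree $T^{\ast}$ containing $r$.

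The whole argument then hinges on the following dichotomy. Either (a) every essential vertex has an essential child, or (b) some essential vertex $t$ has no essential child. In case (b) I produce a star: since $T_t\cap U$ is infinite while $t$ contributes only one vertex and every child $c$ of $t$ satisfies $|T_c\cap U|<\infty$, infinitely many children $c$ must have $T_c\cap U\neq\emptyset$; picking one $u_c\in T_c\cap U$ for each such $c$ yields infinitely many $t$--$u_c$ paths running through distinct children of $t$, so they meet only in $t$, giving a subdivided star centred at $t$ with all leaves in $U$.

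In case (a) I follow essential children from $r$ forever to obtain a ray $R=t_0t_1t_2\cdots$ of essential vertices, which will be the spine of the comb. To attach teeth I consider the nested sets $T_{t_0}\supseteq T_{t_1}\supseteq\cdots$ and their shells $D_i:=T_{t_i}\setminus T_{t_{i+1}}$. A vertex lying in every $T_{t_i}$ would be a descendant of $t_i$ for all $i$ and hence have infinite depth, so $\bigcap_i T_{t_i}=\emptyset$ and $U\cap T_{t_0}$ is the disjoint union of the sets $U\cap D_i$. As $U\cap T_{t_0}$ is infinite, infinitely many shells must meet $U$: if $U\cap D_i=\emptyset$ for all $i\geq M$, then $U\cap T_{t_M}=\bigsqcup_{i\geq M}(U\cap D_i)=\emptyset$, contradicting that $t_M$ is essential. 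For each of the infinitely many $i$ with $U\cap D_i\neq\emptyset$ I choose $u_i\in U\cap D_i$ and take the $t_i$--$u_i$ path $P_i$ in $T$; since $u_i$ is a descendant of $t_i$ but not of $t_{i+1}$, every internal vertex of $P_i$ again lies in $D_i$, so $P_i$ meets $R$ only in $t_i$, and the $P_i$ are pairwise disjoint because the $D_i$ are. Then $R$ together with the $P_i$ is a comb attached to $U$.

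The only real work is the comb construction in case (a): the star in case (b) is immediate, but for the comb one must check that the chosen tooth-paths meet the spine in exactly one vertex and are pairwise disjoint, which is precisely what the shell decomposition into the $D_i$ is engineered to deliver, and one must rule out the possibility that all of the $U$-mass escapes to infinity down the spine, which is exactly what $\bigcap_i T_{t_i}=\emptyset$ prevents.
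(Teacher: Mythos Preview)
Your proof is correct. The paper itself does not prove the Star--Comb Lemma; it is quoted from Diestel's textbook and used as a tool, so there is no in-paper argument to compare against. Your approach---pass to a spanning tree, consider the subtree of \emph{essential} vertices (those whose up-set meets $U$ infinitely), and split according to whether every essential vertex has an essential child---is a clean variant of the standard proof. The textbook version phrases the dichotomy slightly differently (locally finite essential subtree versus a vertex of infinite degree in it, invoking K\H{o}nig's Infinity Lemma for the ray), whereas you build the ray directly by iterated choice; both routes are equivalent in strength and your shell decomposition into the sets $D_i$ handles the disjointness of the tooth-paths neatly.
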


A \emph{ray} is a one-way infinite path.
An \emph{end} of a graph $G$, as defined by Halin~\cite{halin64}, is an equivalence class of rays of~$G$.
Here, two rays of~$G$ are said to be \emph{equivalent} if for every finite vertex set $X\subseteq V(G)$ both have a subray (also called \emph{tail}) in the same component of $G-X$. 
So in particular every end $\omega$ of $G$ chooses, for every finite vertex set $X\subset V(G)$, a unique component $C(X,\omega)$ of $G-X$ in which every ray of $\omega$ has a tail. 
In this situation, the end $\omega$ is said to \emph{live} in $C(X,\omega)$.
The set of ends of a graph $G$ is denoted by~$\Omega(G)$.
We use the convention that $\Omega$ always denotes the set of ends $\Omega(G)$ of the graph named $G$.

Let us say that an end $\omega$ of a graph $G$ is contained \emph{in the closure} of $M$, where $M$ is either  a subgraph of $G$ or a set of vertices of $G$, if for every finite vertex set $X\subseteq V(G)$ the component $C(X,\omega)$ meets $M$.
Equivalently, $\omega$ lies in the closure of $M$ if and only if $G$ contains a comb \at $M$ with its spine in~$\omega$.
We write $\Abs{M}$ 
for the subset of $\Omega$ that consists of the ends of $G$ lying in the closure of $M$.

A subset $X$ of the ground set of a poset $(P,{\le})$ is \emph{cofinal} in $P$, and ${\le}$, if for every $p\in P$ there is an $x\in X$ with $x\ge p$. We say that a rooted tree $T\subset G$ contains a set $U$ \emph{cofinally} if $U\subset V(T)$ and $U$ is cofinal in the tree-order of $T$. We remark that the original statement of the following lemma also takes critical vertex sets in the closure of $T$ or $U$ into account.

\begin{lemma}[{\cite[Lemma~2.13]{StarComb1StarsAndCombs}}]\label{lemma: closure and tree containing U cofinally}
Let $G$ be any graph.
If $T\subset G$ is a rooted tree that contains a vertex set~$U$ cofinally, then $\Abs{T}=\Abs{U}$.
\end{lemma}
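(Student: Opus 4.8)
The plan is to prove the two inclusions $\Abs{U}\subseteq\Abs{T}$ and $\Abs{T}\subseteq\Abs{U}$ separately. The first is immediate and does not use cofinality: since $U\subseteq V(T)$, every comb attached to $U$ is also attached to $T$; equivalently, if $\omega$ lies in the closure of $U$ then for every finite $X\subseteq V(G)$ the component $C(X,\omega)$ meets $U$, and hence meets $V(T)$, so $\omega\in\Abs{T}$.

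For the converse I would fix $\omega\in\Abs{T}$ and a finite set $X\subseteq V(G)$, and produce a vertex of $U$ inside $C(X,\omega)$. The tempting move is to pick any $t\in C(X,\omega)\cap V(T)$ (which exists as $\omega\in\Abs{T}$) and climb from $t$ to a vertex $u\in U$ with $u\ge_T t$ using cofinality; but the $t$--$u$ path in $T$ may leave $C(X,\omega)$, because the up-closure $\lceil t\rceil_T$ of $t$ in $T$ can still meet $X$. The fix is to enlarge $X$ first: put $X':=X\cup\bigcup_{x\in X\cap V(T)}\lfloor x\rfloor_T$, adding the finitely many (root-)down-closures in $T$ of the finitely many vertices of $X\cap V(T)$. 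Then $X'$ is finite and $X\subseteq X'$, so $C(X',\omega)\subseteq C(X,\omega)$, and since $\omega\in\Abs{T}$ there is a vertex $t\in C(X',\omega)\cap V(T)$.

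Now $\lceil t\rceil_T$ is disjoint from $X$: a vertex $x\in X\cap V(T)$ with $x\ge_T t$ would force $t\in\lfloor x\rfloor_T\subseteq X'$, contradicting $t\in C(X',\omega)$. Since $\lceil t\rceil_T$ induces a connected subtree of $T\subseteq G$ that meets $C(X,\omega)$ in $t$ and avoids $X$, it lies entirely inside $C(X,\omega)$. By cofinality of $U$ in the tree-order of $T$ there is $u\in U$ with $u\ge_T t$, that is, $u\in\lceil t\rceil_T\subseteq C(X,\omega)$. Hence $C(X,\omega)$ meets $U$; as $X$ was an arbitrary finite vertex set, $\omega\in\Abs{U}$.

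I expect the one genuine point to watch is exactly this passage: one must not argue with an arbitrary $t\in C(X,\omega)\cap V(T)$, but instead pass to the enlarged separator $X'$ so that the cofinal path climbs entirely within $C(X,\omega)$. Everything else is routine bookkeeping with the tree-order of $T$ (in particular that each $\lfloor x\rfloor_T$ is finite and that $\lceil t\rceil_T$ is connected in $T$).
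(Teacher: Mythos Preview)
Your proof is correct. The paper does not supply its own proof of this lemma; it merely quotes the result from~\cite{StarComb1StarsAndCombs}, so there is nothing to compare your argument against here. Your key manoeuvre---replacing $X$ by the finite set $X':=X\cup\bigcup_{x\in X\cap V(T)}\lfloor x\rfloor_T$ so that the up-closure $\lceil t\rceil_T$ of any $t\in C(X',\omega)\cap V(T)$ avoids $X$---is exactly the right idea, and the remaining steps (connectedness of $\lceil t\rceil_T$, finiteness of each $\lfloor x\rfloor_T$, and the cofinality of~$U$) are sound.
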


Suppose that $H$ is any subgraph of $G$ and $\varphi\colon\Omega(H)\to\Omega(G)$ is the natural map satisfying $\eta\subset \varphi(\eta)$ for every end $\eta$ of $H$.
Furthermore, suppose that a set $\Psi\subset\Omega(G)$ of ends of $G$ is given.
We say that $H$ is \emph{end-faithful} for $\Psi$ if $\varphi\rest \varphi^{-1}(\Psi)$ is injective and $\im(\varphi)\supset\Psi$. And $H$ \emph{reflects} $\Psi$ if $\varphi$ is injective with $\im(\varphi)=\Psi$.
A spanning tree of $G$ that is end-faithful for all the ends of $G$ is \emph{end-faithful}.

\begin{lemma}[{\cite[Lemma~2.11]{StarComb1StarsAndCombs}}]\label{lemma: normal tree reflects the ends}
If $G$ is any graph and $T\subset G$ is any normal tree, then $T$ reflects the ends of $G$ in the closure of $T$.
\end{lemma}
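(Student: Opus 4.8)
The plan is to root $T$ and to verify the two conditions in the definition of ``reflects'' separately: that the natural map $\varphi\colon\Omega(T)\to\Omega(G)$ is injective, and that $\im(\varphi)=\Abs{T}$. Throughout I would use two standard separation properties of normal trees (see~\cite{DiestelBook5}): first, that for every component $K$ of $G-V(T)$ the neighbourhood $N(K)$ is a chain in the tree-order of $T$; and second, the consequence that for incomparable vertices $s,s'$ with meet $v$ the up-closures $\uc{s}$ and $\uc{s'}$ lie in different components of $G-\dc{v}$, where $\dc{v}$ denotes the finite down-closure of $v$. Both follow quickly from normality: any $T$--$T$ path, in particular one running through a single component $K$ or along a single $G$-edge, has comparable endvertices, so a path leaving $\uc{s}$ towards $\uc{s'}$ must meet $\dc{v}$.

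For injectivity I would represent two distinct ends $\eta_1\neq\eta_2$ of $T$ by their unique rays $R_1,R_2$ from the root. Since $\eta_1\neq\eta_2$, these rays share an initial segment and then split at a vertex $v$ into distinct children $c_1\neq c_2$, so that the tails of $R_1,R_2$ lie in $\uc{c_1}$ and $\uc{c_2}$ respectively. By the second separation property the finite set $\dc{v}$ separates these tails in $G$, whence $R_1$ and $R_2$ are inequivalent in $G$ and $\varphi(\eta_1)\neq\varphi(\eta_2)$.

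The inclusion $\im(\varphi)\subseteq\Abs{T}$ is immediate: every $\eta\in\Omega(T)$ has a ray $R\subset T$ with $R\in\varphi(\eta)$, and $R$ lies in $T$, so $\varphi(\eta)\in\Abs{T}$. The substantial direction is $\Abs{T}\subseteq\im(\varphi)$, and this is where I expect the main difficulty. Given $\omega\in\Abs{T}$, the Star-Comb Lemma provides a comb attached to $V(T)$ with spine $S\in\omega$ and teeth in $V(T)$. I would build a root-ray $R'=v_0v_1\cdots$ in $T$ greedily so that each subtree $\uc{v_n}$ contains infinitely many teeth: given $v_n$, if no child inherited infinitely many teeth, then infinitely many teeth would sit in pairwise $\dc{v_n}$-separated child-subtrees, forcing infinitely many of the pairwise disjoint comb-paths to cross the finite separator $\dc{v_n}$---a contradiction.

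It then remains to check that this $R'$ represents $\omega$, i.e.\ that the end $\eta$ of $T$ with $R'\in\eta$ satisfies $\varphi(\eta)=\omega$. I would establish this by showing $\omega\in\Abs{R'}$ and invoking that the closure of a single ray contains only its own end: any comb attached to $V(R')$ has infinitely many disjoint paths to $R'$, hence a spine equivalent to $R'$, so $\Abs{R'}=\{\varphi(\eta)\}$. To see $\omega\in\Abs{R'}$, fix a finite set $X$ and choose $n$ so large that no vertex of $X\cap V(T)$ lies in $\uc{v_n}$; then the down-paths in $T$ from teeth in $\uc{v_n}$ to $v_n$ avoid $X$, while all but finitely many of the disjoint comb-paths from those teeth avoid $X$ and are based on the tail of $S$ lying in $C(X,\omega)$. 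Concatenating one such comb-path with a down-path shows $v_n\in C(X,\omega)$, so $C(X,\omega)$ meets $V(R')$; as $X$ was arbitrary, $\omega\in\Abs{R'}=\{\varphi(\eta)\}$ and hence $\omega=\varphi(\eta)$. Together with the easy inclusion and injectivity this yields that $T$ reflects the ends of $G$ in $\Abs{T}$. The hard part is precisely this last verification: simultaneously controlling the interaction of the arbitrary finite separator $X$ with both the tree-descent and the disjoint comb-paths.
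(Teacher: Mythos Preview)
The paper does not prove this lemma; it merely quotes it with a citation to \cite[Lemma~2.11]{StarComb1StarsAndCombs}. So there is no ``paper's own proof'' to compare against here.

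Your argument is correct and is essentially the standard proof one finds in the literature. The three ingredients---injectivity via the finite separator $\dc{v}$ at the meet of two rooted rays, the trivial inclusion $\im(\varphi)\subseteq\Abs{T}$, and the greedy construction of a normal ray $R'$ towards a given $\omega\in\Abs{T}$ using a comb---are exactly what is used in Diestel's textbook and in the cited source. Two small points worth tightening in a final write-up: in the greedy step, make explicit that a tail of the spine $S$ lies in a single component $K$ of $G-\dc{v_n}$ and that at most one child-subtree $\uc{c}$ can lie in $K$, so that $K$ (and hence that unique $\uc{c}$) must absorb cofinitely many of the teeth in $\uc{v_n}$; and in the closure verification, note that the choice of $n$ with $X\cap V(T)\cap\uc{v_n}=\emptyset$ is possible because each vertex of $T$ lies above only finitely many $v_n$. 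With these clarifications your proof is complete.
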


Given any graph $G$, a set $U\subseteq V(G)$ of vertices is \emph{dispersed} in $G$ if there is no end in the closure of $U$ in $G$. Equivalently, $U$ is dispersed if and only if $G$ contains no comb \at ~$U$. 
In~\cite{jung69}, Jung proved that normally spanned sets of vertices can be characterised in terms of dispersed vertex sets:

\begin{theorem}[{Jung~\cite[Satz~6]{jung69};~\cite[Theorem~3.5]{StarComb1StarsAndCombs}}]\label{thm: jung normal tree char}
Let $G$ be any graph. A vertex set $U\subset V(G)$ is normally spanned in $G$ if and only if it is a countable union of dispersed sets. In particular, 
$G$ is normally spanned if and only if $V(G)$ is a countable union of dispersed sets.
\end{theorem}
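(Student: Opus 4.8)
The plan is to prove the two implications separately, after recording three reductions. Since the end-closure is additive, $\Abs{M\cup M'}=\Abs{M}\cup\Abs{M'}$, a finite union of dispersed sets is dispersed, and every subset of a dispersed set is dispersed. Hence for the forward implication it suffices to cover $U$ by countably many dispersed sets, and for the backward implication we may assume the given decomposition $U=\bigcup_{n\in\N}D_n$ is increasing, $D_0\subseteq D_1\subseteq\cdots$. As a normal tree is connected, we may also assume $G$ is connected. The ``in particular'' is then the special case $U=V(G)$.

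For the forward implication, let $T\subseteq G$ be normal with $U\subseteq V(T)$, rooted at $r$, and let $L_n$ be the set of vertices of $T$ at tree-distance exactly $n$ from $r$. As every vertex of a rooted tree has finite distance from the root, $V(T)=\bigcup_{n\in\N}L_n$ is a countable union, so it suffices to prove that each $L_n$ is dispersed; then $U=\bigcup_n(U\cap L_n)$ is the required decomposition. Suppose some end $\omega$ lies in $\Abs{L_n}$. Then $\omega\in\Abs{T}$, so by Lemma~\ref{lemma: normal tree reflects the ends} there is an end $\eta$ of $T$ with $\varphi(\eta)=\omega$; let $R=v_0v_1v_2\cdots$ be the normal ray of $T$ representing $\eta$, a chain in the tree-order with $v_k$ at depth $k$. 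The key input is the separation property of normal trees, a standard consequence of the definition of normality: for incomparable $x,y\in T$ the finite down-closure $\lceil x\rceil\cap\lceil y\rceil$ separates $x$ from $y$ in $G$. Applying this with the finite set $\lceil v_n\rceil=\{v_0,\dots,v_n\}$, every vertex $w$ of $T$ inside the component $C(\lceil v_n\rceil,\omega)$ --- which contains the tail $v_{n+1}v_{n+2}\cdots$ of $R$ --- must lie strictly above $v_n$: otherwise $w$ is incomparable to $v_n$, its meet with $v_{n+1}$ equals some $v_j$ with $j<n$, and then $\lceil v_j\rceil\subseteq\lceil v_n\rceil$ separates $w$ from $v_{n+1}$, so $w\notin C(\lceil v_n\rceil,\omega)$. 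Hence such a $w$ has depth greater than $n$. But $\omega\in\Abs{L_n}$ forces $L_n\cap C(\lceil v_n\rceil,\omega)\neq\emptyset$, producing a vertex of $L_n$ at depth $>n$ --- a contradiction. So each $L_n$ is dispersed.

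For the backward implication I would construct a normal tree containing $U$ as an increasing union $T=\bigcup_{n\in\N}T_n$ of normal trees $T_n\subseteq T_{n+1}$, with each $T_n$ a down-closed subtree of $T_{n+1}$. I start with $T_0=\{r\}$ and maintain the invariant that for every component $C$ of $G-V(T_n)$ the neighbourhood $N_G(C)\cap V(T_n)$ is a chain of $T_n$. To pass from $T_n$ to $T_{n+1}$ I extend, inside each such component $C$ separately and above the chain $N_G(C)$, by a normal tree that covers $D_n\cap C$ and again leaves chain-neighbourhoods behind. Here dispersedness is decisive: since $D_n\cap C$ is a subset of the dispersed set $D_n$, no comb is attached to it, so by the Star-Comb Lemma every infinite subset admits a star; this is what lets the vertices of $D_n\cap C$ be reached from the chain by normal paths organised at (unbounded but individually finite) heights. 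Taking the union, $T=\bigcup_nT_n$ contains $\bigcup_nD_n=U$. Normality of the limit then follows from the invariant almost for free: every edge of $G$ inside $T$ joins comparable vertices because it already lies in some normal $T_n$, while any longer $T$--$T$ path has its interior inside a single component $C$ of $G-V(T)$ and hence both endpoints in the chain $N_G(C)$, so its endpoints are comparable.

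The main obstacle is the inductive step of this construction: building, inside each component $C$ and above the chain $N_G(C)$, a normal tree that both covers $D_n\cap C$ and preserves the chain-neighbourhood invariant. This is exactly the point at which the dispersedness of $D_n$ must be converted, through the Star-Comb Lemma, into a concrete normal structure of finite height absorbing $D_n\cap C$ without ever forcing two incomparable vertices into a common component-neighbourhood. Once this local step and the preservation of the invariant are established, the passage to the limit and the verification of normality are routine, as indicated above.
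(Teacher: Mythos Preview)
The paper does not prove this theorem; it is quoted in Section~\ref{section: tools and terminology} as a known tool with references to Jung~\cite{jung69} and to~\cite{StarComb1StarsAndCombs}, and no proof is given. So there is no in-paper argument to compare against, and I can only assess your attempt on its own.

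Your forward implication is correct. The levels $L_n$ of a normal tree are dispersed, and your argument via Lemma~\ref{lemma: normal tree reflects the ends} together with the standard chain-separation property of normal trees is sound: once you know the normal ray $v_0v_1v_2\cdots$ represents $\omega$, the set $\{v_0,\dots,v_n\}$ separates $L_n\setminus\{v_n\}$ from the tail of that ray, so $\omega\notin\Abs{L_n}$.

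The backward implication, however, has a genuine gap, and you name it yourself. The entire difficulty of Jung's theorem lies in the step you call ``the main obstacle'': given a dispersed set $D$ and a component $C$ of $G-T_n$ with chain neighbourhood, produce a normal extension inside $C$ that covers $D\cap C$ while preserving the chain-neighbourhood invariant. Invoking the Star--Comb Lemma here is suggestive but not a proof: that lemma only guarantees that every infinite subset of a dispersed set carries a star, which is far from producing a normal tree covering \emph{all} of $D\cap C$. Jung's actual argument (and the modern version in~\cite{StarComb1StarsAndCombs}) proceeds by well-ordering $D\cap C$ and greedily attaching paths, using dispersedness to show that every vertex of $D\cap C$ is eventually reached after only finitely many steps along any branch; the verification that normality and the invariant persist is where the work is. Without carrying this out or citing it, the backward direction remains unproved.
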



\section{Ideal rank}\label{section: ranks and ideals}

\noindent There is more than one rank function that generalises Schmidt's rank.
Before we focus on the \ntrank\ in the next sections, here we present a scheme that captures the essential idea behind Schmidt's rank, and we show how this scheme translates to structural characterisations of the classes of graphs defined by the rank functions which follow this scheme.

Recall that an \emph{ideal} is a class that contains the empty set, is closed under finite unions and is closed under taking subsets of its elements.
Thus, it is the dual notion of a filter.
We remark that usually, ideals are required to be sets, but in this paper we allow them to be classes.

Suppose that $\cI$ is any ideal.
Let us assign $\cI$-\emph{rank} 0 to all the graphs whose vertex sets are contained in~$\cI$.
Given an ordinal $\alpha>0$, we assign $\cI$-\emph{rank} $\alpha$ to every graph $G$ that does not already have an $\cI$-rank less than~$\alpha$ and which has a set $X$ of vertices with $X\in\cI$ such that every component of $G-X$ has some $\cI$-rank less than~$\alpha$.
If $\cI$ is the class of all finite sets, for example, then the $\cI$-rank coincides with Schmidt's rank~\cite{DiestelBook5,Schmidt1983}.
From now on, we refer to Schmidt's rank as $\aleph_0$-\emph{rank}.

The structure of the graphs with an \itrank\ for some pre-specified ideal $\cI$ can be described in terms of tree-decompositions and~$\cI$, as follows.
Let us say that a graph $G$ is $\cI$-\emph{traceable} if it has a rayless tree-decomposition with all parts in~$\cI$.

\begin{theorem}\label{raylessTDC}
For every graph $G$ and every ideal~$\cI$ the following assertions are equivalent:
\begin{enumerate}
    \item $G$ has an $\cI$-rank;
        \item $G$ is \itr .
\end{enumerate}
Moreover, if \emph{(i)} and \emph{(ii)} hold, then
\[
\cI\text{-rank of }G=\min\big\{\,\aleph_0\text{-rank of }T \;\big\vert\; (T,\cV)\text{ is a rayless tree-decomposition of }G\text{ with all parts in }\cI\,\big\}.
\]
\end{theorem}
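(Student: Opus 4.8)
The plan is to prove the equivalence of (i) and (ii) by two inductions, while tracking ranks precisely enough to deliver the displayed formula as a byproduct. For the direction (i)$\Rightarrow$(ii), I would argue by transfinite induction on the $\cI$-rank $\alpha$ of $G$. If $\alpha=0$ then $V(G)\in\cI$, and the trivial tree-decomposition over a single-vertex tree witnesses $\cI$-traceability with $\aleph_0$-rank $0$. For $\alpha>0$, pick a set $X\in\cI$ as in the definition of $\cI$-rank, so every component $C$ of $G-X$ has $\cI$-rank $<\alpha$; by the induction hypothesis each such $C$ has a rayless tree-decomposition $(T_C,\cV_C)$ with all parts in $\cI$ and with $\aleph_0$-rank of $T_C$ equal to the $\cI$-rank of $C$. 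Now build a tree $T$ by adding a new root $r$ whose part is $X$, and attaching each $T_C$ below $r$ (joining $r$ to the root of $T_C$); declaring the part at $r$ to be $X$ and keeping the parts of the $\cV_C$ gives a tree-decomposition of $G$, since adhesion sets along the new edges are subsets of $X$ and the connectedness/separation conditions for tree-decompositions are inherited. The tree $T$ is rayless because each $T_C$ is rayless and $r$ has been added as a new root with the $T_C$ hanging below it, so any ray in $T$ would have a tail in some single $T_C$. Finally $\aleph_0$-rank of $T$ is at most $\alpha$: deleting $r$ from $T$ leaves the components $T_C$, each of $\aleph_0$-rank $<\alpha$.

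For the converse (ii)$\Rightarrow$(i), suppose $(T,\cV)$ is a rayless tree-decomposition of $G$ with all parts $V_t\in\cI$; I would show by transfinite induction on the $\aleph_0$-rank $\beta$ of $T$ that $G$ has an $\cI$-rank at most $\beta$. If $\beta=0$ then $T$ is finite, so $V(G)=\bigcup_{t\in V(T)}V_t$ is a finite union of members of $\cI$, hence in $\cI$, giving $\cI$-rank $0$. If $\beta>0$, pick a finite set $S\subseteq V(T)$ witnessing $\aleph_0$-rank $\beta$ of $T$, i.e.\ every component of $T-S$ has $\aleph_0$-rank $<\beta$. Let $X:=\bigcup_{s\in S}V_s$, a finite union of members of $\cI$, hence $X\in\cI$. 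The key structural point is that every component $D$ of $G-X$ is contained in $\bigcup_{t\in V(T')}V_t$ for a single component $T'$ of $T-S$: indeed, the $V_s$ for $s\in S$ separate the parts in different components of $T-S$ inside $G$, so no path of $G-X$ can cross from one component of $T-S$ to another. Restricting $(T,\cV)$ to the vertices of such a component $T'$ — taking parts $V_t\cap V(D)$, say — yields a rayless tree-decomposition of (the subgraph induced on) $V(D)$ with all parts in $\cI$ and underlying tree of $\aleph_0$-rank $<\beta$; by the induction hypothesis $D$ has $\cI$-rank $<\beta$. Hence $G$ has $\cI$-rank at most $\beta$.

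Combining the two directions gives the equivalence, and the two inductions also yield the minimum formula: the first direction constructs, for a graph of $\cI$-rank $\alpha$, a rayless tree-decomposition with all parts in $\cI$ whose tree has $\aleph_0$-rank $\le\alpha$, so the right-hand minimum is $\le$ the $\cI$-rank of $G$; the second direction shows that any rayless tree-decomposition with all parts in $\cI$ has underlying tree of $\aleph_0$-rank $\ge$ the $\cI$-rank of $G$, so the minimum is $\ge$ the $\cI$-rank, and it is attained by the decomposition built in the first direction.

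I expect the main obstacle to be the bookkeeping in the converse direction: one must be careful that restricting a tree-decomposition to a component $T'$ of $T-S$ really does give a tree-decomposition of the correct induced subgraph (and that this subgraph's vertex set is exactly what the restricted parts cover), and that the separation property of tree-decompositions is used correctly to confine each component of $G-X$ to a single component of $T-S$. A minor subtlety is handling the case where a component of $G-X$ might be confined to $T'$ but the parts $V_t$ for $t\in T'$ also contain vertices of $X$ or of other components; intersecting parts with $V(D)$ (or with the union over that component and passing to the induced subgraph on the relevant vertices) resolves this cleanly, at the cost of a short verification that rayless tree-decompositions are preserved under these operations.
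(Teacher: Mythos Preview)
Your overall architecture matches the paper's proof exactly: two transfinite inductions, one on the $\cI$-rank and one on the $\aleph_0$-rank of the decomposition tree, with the minimum formula falling out of the two inequalities. The converse direction (ii)$\Rightarrow$(i) is essentially what the paper does, and your worries about the bookkeeping there are unfounded; the paper simply observes that each component of $G-X$ lies inside $\bigcup_{t\in T'}G[V_t]$ for a single component $T'$ of $T-S$, and then applies the induction hypothesis (intersecting parts with $V(D)$ as you suggest is a clean way to make this precise).

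There is, however, a genuine gap in your construction for (i)$\Rightarrow$(ii). You write that ``declaring the part at $r$ to be $X$ and keeping the parts of the $\cV_C$ gives a tree-decomposition of $G$''. It does not: every edge of $G$ between $X$ and a component $C$ of $G-X$ has one endpoint in $X$ and one in $V(C)$, but no part of your family contains both, since the parts of $\cV_C$ are subsets of $V(C)$ and the part at $r$ is $X$. The adhesion along each new edge $r\,t_C$ is in fact empty, so the family fails the edge-covering axiom. The paper repairs this by replacing each part $V_C^t$ by $V_C^t\cup X$ (and keeping $X$ at the new node); this is still in $\cI$ because $\cI$ is closed under finite unions, and now every $X$--$C$ edge is covered by any part of $T_C$ containing its $C$-endpoint, while the connectivity axiom holds trivially since every vertex of $X$ now lies in every part. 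With this correction your argument goes through and coincides with the paper's.
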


\begin{proof}
Let $G$ be any graph.
To show the equivalence (i)$\leftrightarrow$(ii) together with the `moreover' part of the theorem, it suffices to show the following two assertions:
\begin{enumerate}
    \item[(1)] If $G$ has a tree-decomposition witnessing that $G$ is \itr , then $G$ has an \itrank\ which is at most the $\aleph_0$-rank of the decomposition tree.
    \item[(2)] If $G$ has an \itrank , then $G$ is \itr\ and this is witnessed by a tree-decomposition whose decomposition tree has $\aleph_0$-rank at most the \itrank\ of $G$.
\end{enumerate}

\noindent (1) 
Let $(T,\cV)$ be any rayless tree-decomposition of $G$ with all parts in~$\cI$.
We show by induction on the $\aleph_0$-rank $\alpha$ of $T$ that $G$ has \itrank\ at most~$\alpha$.
If $\alpha=0$, then $G$ has \itrank ~$0$.
Otherwise $\alpha>0$.
Let $W\subset V(T)$ be any finite vertex set such that every component of $T-W$ has $\aleph_0$-rank less than~$\alpha$.
Then the finite union $X:=\bigcup_{t\in W}V_t$ is contained in~$\cI$.
Every component of $G-X$ is contained in $\bigcup_{t\in T'}G[V_t]$ for some component $T'$ of $T-W$, so by the induction hypothesis every component of $G-X$ has \itrank\ less than~$\alpha$.
Thus, $G$ has \itrank\ at most~$\alpha$.

(2) We prove the statement by induction on the $\cI$-rank $\alpha$ of~$G$.
If $\alpha=0$, then $V(G)\in\cI$ and the trivial tree-decomposition of $G$ into the single part $V(G)$ is as desired.
Otherwise $\alpha>0$.
Then there exists a vertex set $X\subset V(G)$ with $X\in\cI$ such that every component of $G-X$ has \itrank\ less than~$\alpha$.
By the induction hypothesis, every component $C$ of $G-X$ has a rayless tree-decomposition $(T_C,\cV_C)$ with $\cV_C=(\,V_C^t\mid t\in T_C\,)$ such that every part is contained in~$\cI$ and the $\aleph_0$-rank of $T_C$ is less than~$\alpha$.
Without loss of generality the trees $T_C$ are pairwise disjoint.
We choose from every tree $T_C$ an arbitrary node $t_C\in T_C$.
Then we let the tree $T$ be obtained from the disjoint union $\bigcup_C T_C$ by adding a new vertex $t_\ast$ that we join to all the chosen nodes~$t_C$.
We define the family $\cV=(\,V^t\mid t\in T\,)$ by letting $V^t:=V_C^t\cup X$ for all $t\in T_C$ and $V^{t_\ast}:=X$.
Then $(T,\cV)$ is a rayless tree-decomposition of $G$ with all parts in~$\cI$, and the $\aleph_0$-rank of $T$ is at most~$\alpha$ because every component of $T-t_\ast$ has $\aleph_0$-rank less than~$\alpha$.
\end{proof}

As an application of Theorem~\ref{raylessTDC}, we extend the following theorem by Seymour and Thomas:

\begin{theorem}[{\cite[Theorem~1.3]{ExcludingInfiniteTrees}}]\label{thm: Tk}
For every graph $G$ and every uncountable cardinal $\kappa$ the following assertions are equivalent:
\begin{enumerate}
    \item $G$ contains no $T_\kappa$ minor;
    \item $G$ has a rayless tree-decomposition into parts of size less than~$\kappa$.
\end{enumerate}
\end{theorem}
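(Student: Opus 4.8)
The plan is to apply Theorem~\ref{raylessTDC} with the ideal $\cI_\kappa$ of all sets of fewer than $\kappa$ vertices; since $\kappa$ is infinite this is an ideal, and assertion~(ii) says exactly that $G$ is $\cI_\kappa$-traceable. By Theorem~\ref{raylessTDC} this is equivalent to $G$ having an $\cI_\kappa$-rank, so it remains to show that $G$ has no $T_\kappa$ minor if and only if $G$ has an $\cI_\kappa$-rank. Since $T_\kappa$ is connected and both properties pass to and from the components of $G$, I may assume $G$ connected. Call a graph \emph{rankless} if it has no $\cI_\kappa$-rank; I will use that a rankless graph has at least $\kappa=|V(T_\kappa)|$ vertices, that having an $\cI_\kappa$-rank is inherited by subgraphs, and that a graph is rankless precisely when for every vertex set $X$ with $|X|<\kappa$ some component of the graph minus $X$ is rankless (otherwise the successor of the supremum of the ranks of these components would be an $\cI_\kappa$-rank).

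For the direction ``$\cI_\kappa$-rank $\Rightarrow$ no $T_\kappa$ minor'' I would induct transfinitely on the rank. A graph of rank $0$ has fewer than $|V(T_\kappa)|$ vertices and cannot accommodate $\kappa$ disjoint branch sets. For the step, let $X$ with $|X|<\kappa$ witness the rank $\alpha>0$ of $G$, and suppose towards a contradiction that $(B_t)_{t\in T_\kappa}$ is a $T_\kappa$ minor model in $G$. For each of the $\kappa$ children $c$ of the root, the union $W_c$ of the branch sets in the subtree rooted at $c$ induces a connected subgraph of $G$ that still contains a $T_\kappa$ minor, and the $W_c$ are pairwise disjoint; since $|X|<\kappa$, some $W_c$ avoids $X$ and hence lies in a single component of $G-X$. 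That component then has a $T_\kappa$ minor although its rank is less than $\alpha$, contradicting the induction hypothesis.

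The reverse direction is the substantial one, and is essentially the hard half of the cited theorem of Seymour and Thomas~\cite{ExcludingInfiniteTrees}: every rankless connected graph has a $T_\kappa$ minor. The tool I would isolate is the following \emph{branching lemma}: if $H$ is connected and rankless, then there is a vertex set $X$ with $|X|<\kappa$ such that at least $\kappa$ components of $H-X$ are rankless. Granting this, I would build a $T_\kappa$ minor model recursively along the (countably many) levels of $T_\kappa$. With each node $t$ I keep a connected rankless subgraph $R_t$ (with $R_{\mathrm{root}}:=G$) and a port vertex $p_t\in V(R_t)$; to process $t$, apply the branching lemma to $R_t$, throw $p_t$ into the resulting set $X$ (this costs at most one rankless component, so at least $\kappa$ remain), and let $B_t$ be the vertex set of the smallest subtree of a fixed spanning tree of $R_t$ containing $X$. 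Since this subtree has all its leaves in $X$, a minimality argument shows it meets fewer than $\kappa$ components of $R_t-X$; assign $\kappa$ of the untouched rankless components to the $\kappa$ children $s$ of $t$ as their graphs $R_s$, and let $p_s$ be a vertex of $R_s$ with a neighbour in $X\subseteq B_t$. Then each $B_t$ is nonempty and connected, the nesting of the $R_t$ down the tree makes the $B_t$ pairwise disjoint, and the edge from $p_s$ witnesses adjacency of $B_s$ and $B_t$; so $(B_t)_{t\in T_\kappa}$ is a $T_\kappa$ minor model in $G$.

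The main obstacle is the branching lemma, and specifically extracting $\kappa$ rankless components from a single small cut rather than merely two (which would only produce a $T_2$ minor). I would attack it by contradiction: assuming that every set $X$ with $|X|<\kappa$ leaves at least one but fewer than $\kappa$ rankless components of $H$, one must assemble an $\cI_\kappa$-rank for $H$ by a transfinite analysis of how this small family of rankless components refines as $X$ grows. What makes this delicate — and forces the separating sets to be large rather than single vertices — is that ``thin'' infinite parts of $H$ have to be absorbed into a single $\cI_\kappa$-part rather than cut apart, already for graphs such as the caterpillar whose spine is a ray with $\kappa$ pendant leaves at each spine vertex. This is exactly where the work of Seymour and Thomas lies, and one could either reprove it along these lines or simply invoke~\cite{ExcludingInfiniteTrees}.
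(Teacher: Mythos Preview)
The paper does not prove this theorem at all: it is quoted verbatim as a result of Seymour and Thomas~\cite{ExcludingInfiniteTrees}. What the paper \emph{does} do is precisely your first paragraph---apply Theorem~\ref{raylessTDC} to the ideal $\cI_\kappa$ of sets of size $<\kappa$ to obtain a third equivalent condition ``(iii) $G$ has a $\kappa$-rank''---and nothing more. So your opening move matches the paper exactly, and everything after it (the transfinite induction for ``rank $\Rightarrow$ no minor'' and the branching-lemma sketch for the converse) is material the paper neither attempts nor needs.

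Since you went further, a brief comment on that extra material. Your easy direction is fine. For the hard direction your outline is the right shape, but the step ``a minimality argument shows the subtree meets fewer than $\kappa$ components of $R_t-X$'' is not automatic: the minimal subtree of a spanning tree containing $X$ has its \emph{leaves} in $X$, but it may have arbitrarily many internal vertices, each potentially lying in a different component of $R_t-X$, so bounding the number of touched components by $|X|$ needs more than minimality (and regularity of $\kappa$ typically enters here). You correctly flag that this is where the real Seymour--Thomas work sits and that one can simply cite~\cite{ExcludingInfiniteTrees}; that is exactly the paper's stance.
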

\noindent Indeed, applying Theorem~\ref{raylessTDC} to the ideal of all sets of size less than~$\kappa$ and calling the arising rank function the $\kappa$-\emph{rank}, we may add:
\begin{enumerate}
    \item[(iii)] $G$ has a $\kappa$-rank.
\end{enumerate}
We remark that, for regular uncountable cardinals $\kappa$, Seymour and Thomas also showed that a graph contains a $T_{\kappa}$ minor if and only if it contains a subdivision of~$T_{\kappa}$.

\section{Normally traceable graphs}\label{section: rayless tree-decompositions}


\noindent Let $G$ be any connected graph.
By Jung's Theorem~\ref{thm: jung normal tree char}, the vertex sets $X\subset V(G)$ that are normally spanned in~$G$ form an ideal which we denote by~$\cI(G)$.
We call the $\cI(G)$-rank of a subgraph $H\subset G$ the \emph{\ntrank } of~$H$ in~$G$.
The graph $G$ has  \emph{\ntrank } $\alpha$ for an ordinal $\alpha$ if $G$ has \ntrank\ $\alpha$ in $G$.

Since we have defined the normal rank using ideals, Theorem~\ref{raylessTDC} implies Theorem~\ref{main result: rayless tree}.
Below, we point out a few properties of the \ntrank.

\begin{lemma}
Let $G$ be any connected graph.
\begin{enumerate} 
    \item If $G$ has $\aleph_1$-rank $\alpha$, then $G$ has \ntrank\  at most~$\alpha$.
    \item There are graphs that have a \ntrank\ but that have neither an $\aleph_1$-rank nor a normal\\spanning tree.
\end{enumerate}
\end{lemma}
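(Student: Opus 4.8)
For part (i), the key observation is that the ideal of sets of size $<\aleph_1$ (i.e.\ countable sets) is contained in the ideal $\cI(G)$ of normally spanned vertex sets, since every countable vertex set is normally spanned in $G$ by Jung's Theorem~\ref{thm: jung normal tree char}. Hence any witness for the $\aleph_1$-rank is automatically a witness for the normal rank: the plan is to prove by transfinite induction on $\alpha$ that if $G$ has $\aleph_1$-rank $\alpha$ then $G$ has \ntrank\ at most $\alpha$. The base case $\alpha=0$ is immediate because $V(G)$ countable implies $V(G)\in\cI(G)$. For the inductive step, a separator $X$ of size $<\aleph_1$ realising the $\aleph_1$-rank lies in $\cI(G)$, and every component of $G-X$ has $\aleph_1$-rank $<\alpha$, hence by the induction hypothesis \ntrank\ $<\alpha$; this exhibits $G$ as having \ntrank\ at most $\alpha$. (Alternatively, one can simply invoke Theorem~\ref{raylessTDC} twice and note that a rayless tree-decomposition into countable parts is a rayless tree-decomposition into normally spanned parts, with the same decomposition tree, so the minimum defining the normal rank is over a larger family and is therefore no larger.)

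For part (ii), I would take an $\aleph_1$-branching tree with all tops, $G := T_{\aleph_1}$ with every rooted ray selected, as constructed in the introduction. Three things need to be checked. First, $G$ has a \ntrank: indeed $G$ has a star-decomposition in which $T_{\aleph_1}$ is the central part — which is normally spanned in $G$, being itself a normal spanning tree of the central bag — and each top together with its (infinitely many) neighbours on its ray forms a leaf bag, which is countable and hence normally spanned in $G$. This rayless tree-decomposition into normally spanned parts witnesses, via Theorem~\ref{main result: rayless tree}, that $G$ has a \ntrank\ (in fact \ntrank\ at most $1$). Second, $G$ has no normal spanning tree: this is exactly the cited fact \cite{DiestelLeaderNST,PitzNewNSTobstructions} that not every $T_{\aleph_1}$ with tops is normally spanned, and choosing \emph{all} tops guarantees we are in the non-normally-spanned regime.

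Third — and this is the step I expect to require the most care — $G$ has no $\aleph_1$-rank. By Theorem~\ref{raylessTDC} (applied to the ideal of countable sets) it is equivalent to show $G$ has no rayless tree-decomposition into countable parts, and by Theorem~\ref{thm: Tk} this is in turn equivalent to $G$ containing $T_{\aleph_1}$ as a minor. But $G\supseteq T_{\aleph_1}$ as a subgraph by construction (the tops are only added on top of a full copy of $T_{\aleph_1}$), so $G$ contains $T_{\aleph_1}$ as a minor, and therefore $G$ has no $\aleph_1$-rank. The only subtlety is making sure the version of $T_{\aleph_1}$ with tops we pick genuinely fails to be normally spanned while still containing the subtree $T_{\aleph_1}$; both are guaranteed by taking \emph{all} rooted rays as the selected rays, as flagged in the introduction ("any $T_{\aleph_1}$ with all tops witnesses that this extension is proper"). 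Assembling these three points gives a single graph with a \ntrank\ but neither an $\aleph_1$-rank nor a normal spanning tree, as required.
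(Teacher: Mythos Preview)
Your alternative route for (i) via Theorem~\ref{raylessTDC} is correct and clean. Your direct inductive argument, however, has a subtle gap that the paper takes care to avoid. As you phrase it, the induction hypothesis reads: ``if a connected graph $G$ has $\aleph_1$-rank $\alpha$, then $G$ has \ntrank\ at most $\alpha$''---where the \ntrank\ of $G$ means its $\cI(G)$-rank. In the inductive step you apply this to a component $C$ of $G-X$, concluding that $C$ has $\cI(C)$-rank less than $\alpha$. But to witness that $G$ has $\cI(G)$-rank at most $\alpha$ you need $C$ to have $\cI(G)$-rank less than $\alpha$, and since $\cI(G)\rest 2^{V(C)}\subset\cI(C)$ (normally spanned in $G$ implies normally spanned in $C$, not conversely), the inequality you get points the wrong way. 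The paper fixes this by strengthening the induction hypothesis to range over \emph{all subgraphs $H\subset G$}: one shows that every such $H$ with $\aleph_1$-rank $\alpha$ has \ntrank\ at most $\alpha$ \emph{in $G$}. Then each component $C$ of $H-X$ is again a subgraph of $G$ and falls directly under the hypothesis, with the ambient ideal $\cI(G)$ held fixed throughout.

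For (ii) your example and argument match the paper's (the paper additionally adds all edges between each top and its ray, but this is inessential). One small point: to see that $V(T_{\aleph_1})$ is normally spanned \emph{in $G$}, it is not enough that $T_{\aleph_1}$ is a normal spanning tree of the subgraph it induces; you need $T_{\aleph_1}$ to be a normal tree in $G$ itself. This does hold, since any $T_{\aleph_1}$-path in $G$ passes through a single top and hence has both endvertices on one rooted ray of $T_{\aleph_1}$, making them comparable. The paper then observes (equivalently to your star-decomposition) that $G-V(T_{\aleph_1})$ consists of isolated tops, giving \ntrank~$1$; the remaining two checks---no $\aleph_1$-rank via Theorem~\ref{thm: Tk}, no normal spanning tree via~\cite{DiestelLeaderNST,PitzNewNSTobstructions}---are exactly as you describe.
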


\begin{proof} 
(i) We show that every subgraph $H\subseteq G$ of $\aleph_1$-rank $\alpha$ has \ntrank\ at most~$\alpha$ in $G$, by induction on $\alpha$; for $H=G$ this establishes (i). 
The vertex set of any countable subgraph of $G$ is normally spanned in $G$ by Jung's Theorem~\ref{thm: jung normal tree char}, so the base case holds. For the induction step suppose that $\alpha>0$. We find a countable vertex set $X\subset V(H)$ so that every component of $H-X$ has some $\aleph_1$-rank less than~$\alpha$. As $X$ is countable it is also normally spanned in $G$. 
By the induction hypothesis every component of $H-X$ has \ntrank\ less than~$\alpha$ in $G$. Hence $X$ witnesses that $H$ has \ntrank\ at most~$\alpha$ in $G$.

(ii) Let $G$ be any $T_{\aleph_1}$ with all tops and all edges between each top and its corresponding ray. Then~$G$ has \ntrank\ $1$ because $G-T_{\aleph_1}$ consists only of isolated vertices. 
However, $G$ has no $\aleph_1$-rank by Theorem~\ref{thm: Tk}, and $G$ has no normal spanning tree as pointed out in~\cite{DiestelLeaderNST,PitzNewNSTobstructions}.
\end{proof}

\begin{lemma}\label{lemma:ntrankOfSubgraphs}
Let $H\subset H'\subset G$ be any three graphs with $G$ connected.
\begin{enumerate}
    \item If $H'$ has \ntrank\ $\alpha$ in $G$, then $H$ has \ntrank\ at most~$\alpha$ in $G$. 
\end{enumerate}
For the second item, $H'$ is required to be connected as well.
\begin{enumerate}[resume]
    \item If $H$ has \ntrank\ $\alpha$ in $G$, then $H$ has \ntrank\ at most~$\alpha$ in $H'$.\\
    In particular, if $H$ has \ntrank\ $\alpha$ in $G$ and $H$ is connected, then $H$\\has \ntrank\ at most~$\alpha$.
\end{enumerate} 
\end{lemma}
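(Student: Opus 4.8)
The plan is to unwind the recursive definition of the $\cI$-rank from Section~\ref{section: ranks and ideals} with $\cI=\cI(G)$, using two soft facts about normally spanned sets: first, that $\cI(G)$ is an ideal and hence closed under taking subsets; and second, a \emph{transfer fact} for the passage to subgraphs, namely that if $H'\subset G$ and $U\subset V(H')$ with $U$ normally spanned in~$G$, then $U$ is already normally spanned in~$H'$. Both items of the lemma will then follow by transfinite induction on the rank~$\alpha$.

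For~(i) I would induct on~$\alpha$. In the base case $\alpha=0$ we have $V(H')\in\cI(G)$, so $V(H)\subset V(H')$ also lies in $\cI(G)$, and $H$ has \ntrank~$0$ in~$G$. For $\alpha>0$, fix a vertex set $X\subset V(H')$ with $X\in\cI(G)$ such that every component of $H'-X$ has \ntrank\ less than~$\alpha$ in~$G$, and put $X':=X\cap V(H)$. Then $X'\in\cI(G)$ and $H-X'=H-X$. Each component $C$ of $H-X$ is connected and contained in $H'-X$, hence lies inside a single component $D$ of $H'-X$; applying the induction hypothesis to the triple $C\subset D\subset G$ shows that $C$ has \ntrank\ less than~$\alpha$ in~$G$. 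So $X'$ witnesses that $H$ has \ntrank\ at most~$\alpha$ in~$G$.

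For~(ii) I would first establish the transfer fact. Let $U\subset V(H')$ be normally spanned in~$G$. By Jung's Theorem~\ref{thm: jung normal tree char} applied to~$G$, $U$ is a countable union of sets dispersed in~$G$; and any $V\subset V(H')$ dispersed in~$G$ is dispersed in~$H'$ as well, because a comb \at $V$ inside $H'$ would also be a comb \at $V$ inside~$G$. Hence $U$ is a countable union of sets dispersed in~$H'$, so Jung's Theorem applied to~$H'$ yields that $U$ is normally spanned in~$H'$, i.e.\ $U\in\cI(H')$. Granting this, I would induct on the \ntrank~$\alpha$ of $H$ in~$G$. If $\alpha=0$ then $V(H)\in\cI(G)$, and since $V(H)\subset V(H')$ the transfer fact gives $V(H)\in\cI(H')$, so $H$ has \ntrank~$0$ in~$H'$. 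If $\alpha>0$, choose $X\subset V(H)$ with $X\in\cI(G)$ such that every component of $H-X$ has \ntrank\ less than~$\alpha$ in~$G$; then $X\subset V(H')$ gives $X\in\cI(H')$ by the transfer fact, while every component $C$ of $H-X$ satisfies $C\subset H'$ with $H'$ connected, so the induction hypothesis gives that $C$ has \ntrank\ less than~$\alpha$ in~$H'$. Thus $X$ witnesses that $H$ has \ntrank\ at most~$\alpha$ in~$H'$. The `in particular' statement is the special case $H'=H$, which is legitimate since $\subset$ denotes~$\subseteq$ and $H$ is connected, and because the \ntrank\ of $H$ in~$H$ is by definition the \ntrank\ of~$H$.

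The only genuinely non-formal step is the transfer fact underlying~(ii): a normal tree of $G$ that contains~$U$ need not be a subgraph of~$H'$, so one cannot argue with normal trees directly and must instead route through Jung's dispersed-set characterisation. Everything else is routine manipulation of the recursive definition of the rank, the one minor care point being that in~(i) the chosen separator must be intersected with $V(H)$ to become an admissible separator for the graph~$H$.
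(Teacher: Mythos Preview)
Your proof is correct and follows essentially the same approach as the paper's: transfinite induction on~$\alpha$ for both items, with the passage from $\cI(G)$ to $\cI(H')$ in~(ii) handled via Jung's dispersed-set characterisation. The only differences are cosmetic: you explicitly intersect the separator with $V(H)$ in~(i) and spell out the dispersed-set argument behind the transfer fact in~(ii), whereas the paper leaves both implicit.
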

\begin{proof}
We prove~(i) by induction on~$\alpha$.
If $\alpha=0$, then the vertex set of $H'$ is normally spanned in $G$; in particular, the vertex set of $H\subset H'$ is normally spanned in $G$.

Otherwise $\alpha>0$.
Then there exists a vertex set $X\subset V(H')$ that is normally spanned in $G$ such that every component of $H'-X$ has \ntrank\ less than~$\alpha$ in $G$.
Every component of $H-X$ is contained in a component of $H'-X$ and hence has \ntrank\ less than~$\alpha$ in $G$ by the induction hypothesis.
Thus, $H$ has \ntrank\ at most~$\alpha$ in $G$.

We prove~(ii) by induction on $\alpha$ as well.
If $\alpha=0$, then the vertex set of $H$ is normally spanned in~$G$.
In particular, by Jung's Theorem~\ref{thm: jung normal tree char}, the vertex set of $H$ is normally spanned in $H'\subset G$, so $H$ has \ntrank\ $0$ in $H'$ as desired.

Otherwise $\alpha>0$.
Then there exists a vertex set $X\subset V(H)$ that is normally spanned in $G$ such that every component of $H-X$ has \ntrank\ less than~$\alpha$ in $G$. 
Note that $X$ is also normally spanned in $H'\subset G$ by Jung's Theorem~\ref{thm: jung normal tree char}.
By the induction hypothesis, every component of $H-X$ has \ntrank\ less than~$\alpha$ in~$H'$.
Thus, $H$ has \ntrank\ at most~$\alpha$ in $H'$.
\end{proof}

\section{End-faithful spanning trees}\label{section: end-faithful spanning trees}
\noindent In this section we prove that every \ntr\ graph has an end-faithful spanning tree.
Our proof requires some preparation.

\begin{lemma}\label{lemma: reflecting forest}
Let $G$ be any graph and let $\Psi\subseteq \Omega(G)$ be any set of ends of~$G$. 
Furthermore, let $H\subset G$ be any spanning forest that reflects $\Psi$ and let $C$ be any component of~$H$.
If a spanning tree $T$ of $G$ arises from $H$ by adding one $D$--$C$ edge for every component $D\neq C$ of $H$, then $T$ reflects~$\Psi$.\qed
\end{lemma}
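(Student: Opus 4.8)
The plan is to prove that the inclusion $H\subset T$ induces a \emph{bijection} $\iota\colon\Omega(H)\to\Omega(T)$ which commutes with the natural maps to $\Omega(G)$, i.e.\ $\varphi_T\circ\iota=\varphi_H$, where $\varphi_H\colon\Omega(H)\to\Omega(G)$ and $\varphi_T\colon\Omega(T)\to\Omega(G)$ are the natural maps. Granting this, the lemma follows at once: since $H$ reflects $\Psi$, the map $\varphi_H$ is injective with image $\Psi$, hence so is $\varphi_T=\varphi_H\circ\iota^{-1}$, which is exactly the assertion that $T$ reflects~$\Psi$.

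Two of the three ingredients are routine. First, $\iota$ is well defined — rays in a common end of $H$ lie in a common end of $T$ — simply because $H\subset T$ is spanning, so every component of $H-X$ lies inside a component of $T-X$ for each finite $X\subset V(G)$. Second, $\varphi_T\circ\iota=\varphi_H$ holds because a ray of $H$ is a ray of $G$ lying in exactly one end of $G$, and that same end contains it whether we read the ray inside $H$ or inside $T$. The work is therefore the bijectivity of $\iota$, and here I would exploit the shape of~$T$: writing $e_D$ for the $D$--$C$ edge added for each component $D\ne C$ of $H$, and $x_D$ for its endpoint in $D$, the tree $T$ is a ``star of trees''. Namely, each $e_D$ is a bridge of $T$, the edges of $T$ inside a component $D\ne C$ of $H$ are exactly those of $D$, the only edge of $T$ leaving $V(D)$ is $e_D$, and the only edges of $T$ leaving $V(C)$ are the various $e_D$.

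For surjectivity I would first show that every ray $R$ of $T$ has a tail contained in a single component of $H$; that tail is then a ray of $H$ whose end maps to the end of $R$ under $\iota$. The point is that $R$, being a path, crosses each bridge $e_D$ at most once, and a short case check using the star-of-trees structure shows that once $R$ has crossed a bridge it can cross at most one more — after entering some $V(D)$ across $e_D$ it can never leave $V(D)$ again, and from $V(C)$ the only exits are bridges — so $R$ uses only finitely many bridges and has a tail inside one component of $H$. For injectivity, take distinct ends $\eta\ne\eta'$ of $H$ with rays $R\in\eta$ and $R'\in\eta'$, each lying in a single component of $H$. If these components differ, say $R'$ lies in $D\ne C$, then deleting the single vertex $x_D$ already separates a tail of $R'$ from a tail of $R$ in $T$, because $T-x_D$ cuts $D-x_D$ off from the rest of $T$. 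If instead $R$ and $R'$ lie in a common component $B$ of $H$, then some finite $Y\subset V(B)$ separates their tails inside $B$, and I would check that $Y$ still separates them in $T$: the blocks hanging off $B$ each attach to $B$ at a single vertex, so they cannot reconnect two components of $B-Y$. In either case $\iota(\eta)\ne\iota(\eta')$.

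The main obstacle is the injectivity of $\iota$: a priori the added edges — of which there may be infinitely many — could amalgamate several ends of $H$ into a single end of $T$, and ruling this out is exactly what the hypothesis that every added edge joins $C$ to another component of $H$ buys us. Because of it, each added edge is a bridge attaching a whole component of $H$ to the rest of $T$ at a single vertex, so no finite separation inside a component of $H$ is destroyed on passing to $T$; everything else in the proof is bookkeeping with the star-of-trees picture.
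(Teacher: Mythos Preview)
Your argument is correct. The paper itself gives no proof of this lemma at all --- the statement ends with a \qed{} symbol, indicating the authors regard it as immediate --- so there is nothing to compare against. Your proof is a careful unpacking of why the lemma is true: the key structural observation is the ``star of trees'' picture, from which it follows that every ray of $T$ crosses at most two of the added bridges (hence has a tail in a single component of $H$, giving surjectivity of $\iota$), and that no finite separator inside a component of $H$ is bridged over by the added edges (giving injectivity). Both case analyses are sound; in particular, your treatment of the injectivity case where both rays lie in the same component $B$ correctly uses that each other component attaches to $B$ at a single vertex and so cannot reconnect two components of $B-Y$. The reduction to the bijectivity of $\iota$ via $\varphi_T=\varphi_H\circ\iota^{-1}$ is clean and makes the conclusion transparent.
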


\begin{lemma}\label{lemma: endfaithful and complete countable set}
Let $G$ be any graph with a spanning tree $T$ that reflects a set $\Psi\subseteq \Omega(G)$ and let $R\subset G$ be a ray from some end in $\Psi$. Then there exists a spanning tree $T'\subset G$ that reflects $\Psi$ and contains $R$.

Moreover, $T'$ can be chosen such that no end other than the end of $R$ lies in the closure of the symmetric difference $E(T)\triangle E(T')$ (viewed as a subgraph of $G$).
\end{lemma}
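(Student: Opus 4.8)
The plan is to take the given spanning tree $T$ that reflects $\Psi$ and surgically incorporate the ray $R$ while preserving the end structure. First I would look at the (unique) path $P$ in $T$ that "lies along" $R$: since $R$ is a ray in $G$ and $T$ is a spanning tree, $R$ need not be a subgraph of $T$, but I can consider the subgraph $T \cup R$. The key idea is to delete, for every edge $e = xy$ of $R$ that is not already an edge of $T$, exactly one edge from the fundamental cycle $C_e$ of $e$ in $T$; concretely, I would orient $R$ and for each such $e = x_ix_{i+1}$ remove the last edge of the $x_i$--$x_{i+1}$ path in the current tree that separates $x_{i+1}$ from the initial segment of $R$, then add $e$. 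One must check that these deletions can be carried out simultaneously without disconnecting $G$ — this is where I would be careful: rather than doing all deletions at once, I would set it up so that $R$ gets "pulled into" the tree, i.e. define $T' := (T \cup R) - F$ where $F$ is a suitable set of one edge per fundamental cycle, chosen so that $T'$ is acyclic and spanning. A clean way: let $R = r_0 r_1 r_2 \dots$, and inductively maintain a spanning tree containing the segment $r_0 \dots r_n$; at step $n$, if $r_n r_{n+1} \notin$ current tree, add it and delete the edge incident with $r_{n+1}$ on the current tree-path from $r_{n+1}$ back towards $r_0$. This is a standard ray-absorption argument; the subtlety is that it is a transfinite-free induction only if one verifies the limit object $T'$ is still a spanning tree, which follows because each vertex's final status stabilises and acyclicity/connectedness are preserved at every finite stage and no cycle or cut is created in the limit.

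Next I would control which edges change. By construction every edge of $E(T) \triangle E(T')$ is either an edge of $R$ (finitely many per "level", all ending in the end of $R$) or an edge $e$ of $T$ that was deleted because it lay on the fundamental cycle of some $r_n r_{n+1}$; each such deleted $e$ lies on the $r_n$--$r_{n+1}$ path in $T$, and the union of all these $T$-paths over $n \in \mathbb{N}$ is a subgraph of $T$ that, together with $R$, forms a connected subgraph $S$ all of whose "branches off $R$" are finite paths (bounded by the fundamental-cycle structure). Thus $E(T) \triangle E(T') \subseteq E(S) \cup E(R)$ up to a subgraph that is a comb with spine $R$ (spine in the end of $R$) plus finitely many extra edges at each tooth. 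By the Star–Comb Lemma's closure characterisation quoted before Lemma~\ref{lemma: closure and tree containing U cofinally} — $\omega$ lies in the closure of $M$ iff $G$ contains a comb attached to $M$ with spine in $\omega$ — any end in the closure of this symmetric-difference subgraph would have to be the end of $R$, since the subgraph is essentially a single ray with finite attachments: any comb attached to it with spine in a different end would yield a star or comb forcing that end into the closure of the vertex set of $R$, i.e. equal to the end of $R$. This gives the "moreover" clause.

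Finally I would verify $T'$ reflects $\Psi$. Reflection means the natural map $\varphi \colon \Omega(T') \to \Omega(G)$ is injective with image exactly $\Psi$. Since $T$ and $T'$ differ only in the closure of one end $\omega_R$ (the end of $R$, which lies in $\Psi$ since $R$ comes from an end in $\Psi$), for every finite vertex set $X$ the components of $T-X$ and of $T'-X$ "agree away from $\omega_R$" in the sense that they induce the same partition of the vertices in components other than $C(X,\omega_R)$. Concretely: $\Omega(T')$ and $\Omega(T)$ are naturally identified — both in bijection with $\Psi$ via $\varphi$ — because changing a tree only within the closure of a single end $\omega_R$ cannot create, destroy, split, or merge any end other than possibly $\omega_R$ itself, and $\omega_R$ persists as a single end (the ray $R \subseteq T'$ still defines it, and no new end is spawned because the changed part is comb-like with finite teeth). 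Hence $\varphi$ for $T'$ has the same image $\Psi$ and the same injectivity as for $T$. Alternatively, and perhaps more cleanly, I would decompose: let $H := T \cap T'$ (the common forest) and apply Lemma~\ref{lemma: reflecting forest}-style reasoning, noting $H$ reflects $\Psi \setminus \{\omega_R\}$ plus possibly still captures $\omega_R$, then re-add $R$ and the finitely-per-level bridging edges and check no end is gained or lost.

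The main obstacle I anticipate is the simultaneous-deletion bookkeeping: making precise that the finite-stage ray-absorption process has a well-defined limit that is genuinely a spanning tree, and pinning down the exact shape of $E(T)\triangle E(T')$ tightly enough that its closure contains only $\omega_R$. Everything else — the reflection claim — then follows from the structural fact that modifying a spanning tree within the closure of a single end changes neither the set of ends nor the reflection property, which is essentially a repackaging of the comb characterisation of closures together with Lemma~\ref{lemma: closure and tree containing U cofinally}.
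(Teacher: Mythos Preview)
Your edge-swapping scheme has a genuine gap: it need not preserve the injectivity of $\varphi$ at the end $\omega_R$ of $R$. The problem is that you only delete edges incident with vertices of $R$, so the ray $R_T\subset T$ that originally represented $\omega_R$ may survive untouched in~$T'$. When $R$ and $R_T$ are eventually vertex-disjoint, both sit inside the tree $T'$ as disjoint rays and hence define two distinct tree-ends, each mapping to $\omega_R$ in~$G$; thus $T'$ fails to reflect~$\Psi$. Concretely, take $G$ to be the one-ended ladder on rays $A=a_0a_1\ldots$ and $B=b_0b_1\ldots$ with rungs $a_ib_i$, let $T:=A\cup\{a_ib_i:i\ge 0\}$ (which reflects the single end of~$G$), and let $R:=B$. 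Your procedure deletes each rung $a_ib_i$ with $i\ge 1$ and adds each $b_ib_{i+1}$, producing the double ray $T'=\ldots a_2a_1a_0b_0b_1b_2\ldots$, a two-ended tree in a one-ended graph. Your sentence ``$\omega_R$ persists as a single end \ldots\ no new end is spawned because the changed part is comb-like with finite teeth'' is exactly where the argument breaks: the symmetric difference being comb-like does \emph{not} prevent the old representative $R_T$ from remaining as a second tree-end of~$T'$ in~$\omega_R$.

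The paper's proof confronts this head-on. It first locates $R_T$, builds a comb $C$ linking $R$ to $R_T$, and passes to the down-closure $U:=V(\lceil C\rceil_T)$, which satisfies $\partial_\Omega U=\{\omega_R\}$ by Lemma~\ref{lemma: closure and tree containing U cofinally}. Crucially, if $R$ has no tail in $R_T$, it then selects infinitely many edges $F\subset E(R_T)$---one from each of infinitely many edge-disjoint $C$-paths along $R_T$---and rebuilds a spanning tree $T_R$ of $(\lceil C\rceil_T\cup C)-F$ extending~$C$. The deletion of $F$ is precisely what kills $R_T$ and forces $T_R$ to be one-ended; the verification that $T_R$ reflects $\{\omega_R\}$ uses that any ray in $T_R$ avoiding $R$ would need a tail in $\lceil C\rceil_T-C$, which is impossible once $F$ is removed. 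Outside $U$ the components of $T-U$ are re-attached verbatim, and reflection of $\Psi$ follows from Lemma~\ref{lemma: reflecting forest}. Your approach can be repaired along the same lines---you would need an additional step that deletes infinitely many edges of $R_T$ whenever $R$ and $R_T$ fail to share a tail---but as written the argument is incomplete.
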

\noindent The `moreover' part of the lemma says that $T$ and $T'$ differ only locally. 
Note that there may also be no end in the closure of $E(T)\triangle E(T')$.

\begin{proof}
Given $T\subset G$, $\Psi$ and $R$ as in the statement of the lemma, we root $T$ arbitrarily and write $\omega$ for the end of $R$ in~$G$.
Furthermore, we write $R_T$ for the unique rooted ray in $T$ that is equivalent to $R$, and we pick a sequence $P_0,P_1,\ldots$ of pairwise disjoint $R$--$R_T$ paths in $G$.
We write $C$ for the comb $C:=R\cup\bigcup_{n} P_n$, and we write $U$ for the vertex set of the subtree $\dc{C}_T$ of $T$ induced by the down-closure of $V(C)$ in~$T$.
Note that $R_T\subset\dc{C}_T$ because the paths $P_0,P_1,\ldots$ meet $R_T$ infinitely often.
By standard arguments we have $\Abs{C}=\{\omega\}$, and so $\Abs{U}=\{\omega\}$ follows by Lemma~\ref{lemma: closure and tree containing U cofinally}.
Since $T$ reflects $\Psi$ and $\dc{C}_T$ contains only rays from $\omega$, we deduce that $\dc{C}_T$ is either rayless or one-ended.
As $\dc{C}_T$ contains the ray $R_T$, it is one-ended.

Next, we define an edge set $F\subset E(\dc{C}_T)$, as follows.
If $R$ has a tail in $R_T$, then we set $F=\emptyset$.
Otherwise $R$ has no tail in $R_T$.
Then we select infinitely many pairwise edge-disjoint $C$-paths $Q_0,Q_1,\ldots$ in the ray $R_T$ (these exist because $R$ has no tail in $R_T$).
We choose one edge of every path $Q_n$ and we let $F$ consist of all the chosen edges, completing the definition of $F$.

The graph $(\dc{C}_T\cup C)-F$ is a connected subgraph of $G$ and inside it, we extend $C$ arbitrarily to a spanning tree $T_R$.
Then $T_R$ has vertex set $U$, and $T_R$ reflects $\{\omega\}$:
Every ray $R'$ in $T_R$ that is disjoint from $R$ meets at most one component of $C-R$ because $C$ and $R'$ are contained in the tree $T_R$, and hence $R'$ must have a tail in $\dc{C}_T- C$.
But $\dc{C}_T$ contains just one rooted ray, namely the ray $R_T$, and either $R_T$ contains a tail of $R$ or $F$ consists of infinitely many edges of $R_T$, contradicting the existence of~$R'$ in $T_R\subset (\dc{C}_T\cup C)-F$.
It remains to extend $T_R$ to a spanning tree of $G$ reflecting $\Psi$.
For this, we consider the collection $\{\,T_i\mid i\in I\,\}$ of all the components of $T-U$.
By the choice of $U$, every end $\omega'$ of $G$ other than $\omega$ is still represented by an end of one of the trees $T_i$:
Indeed, if $\omega'$ is an end of $G$ other than $\omega$, then it does not lie in the closure of $U$, and hence every ray in $\omega'$ has a tail that avoids $U$.
In particular, every ray in $T$ that lies in $\omega'$ has some tail that avoids~$U$.
Therefore, the union of $T_R$ and all the trees $T_i$ is a spanning forest of $G$ reflecting~$\Psi$. 

We extend this spanning forest to a spanning tree $T'$ by adding all the $T_i$--$T_R$ edges of $T$ for every $i\in I$ (note that $T$ contains precisely one $T_i$--$T_R$ edge for every $i\in I$ as $T\cap G[U]=\dc{C}_T$ is connected).
Then $T'$ reflects $\Psi$ again by Lemma~\ref{lemma: reflecting forest}.
To see $\Abs{(E(T)\triangle E(T'))}\subset\{\omega\}$ recall $\Abs{G[U]}=\{\omega\}$ and note that the symmetric difference is contained in $G[U]$ entirely.
\end{proof}

\begin{lemma}\label{lemma: ends are contained in closure of X or C}
Let $G$ be any graph and let $X\subseteq V(G)$ be any vertex set. 
\begin{enumerate}
    \item Every end of $G$ is contained in the closure of $X$ in $G$ or in the closure of some component\\of $G-X$ in $G$.
    \item Every end of $G$ that is contained in the closure of two distinct components of $G-X$ in~$G$\\is also contained in the closure of $X$ in $G$. 
\end{enumerate}
\end{lemma}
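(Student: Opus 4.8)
The statement is purely about ends and components, so both parts should follow by chasing the definitions of "living in a component'' and "lying in the closure'' through finite separators. The plan is to argue directly, without any of the tree-decomposition machinery.

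For part (i): Let $\omega$ be an end of $G$, and suppose $\omega$ does not lie in the closure of $X$; I must exhibit a component of $G-X$ in whose closure $\omega$ lies. Since $\omega\notin\Abs{X}$, there is a finite vertex set $X_0\subseteq V(G)$ such that $C(X_0,\omega)$ meets no vertex of $X$, i.e.\ $C(X_0,\omega)\cap X=\emptyset$. Then $C(X_0,\omega)$, being connected and disjoint from $X$, lies inside a unique component $C$ of $G-X$. The plan is to show $\omega\in\Abs{C}$, i.e.\ that for every finite $Y\subseteq V(G)$ the component $C(Y,\omega)$ meets $C$. Given such a $Y$, apply the definition of an end to the finite set $X_0\cup Y$: the component $C(X_0\cup Y,\omega)$ is contained both in $C(X_0,\omega)\subseteq C$ and in $C(Y,\omega)$, and it is nonempty, so $C(Y,\omega)$ meets $C$ as required. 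Equivalently, one can phrase this via the star--comb lemma / the comb characterisation of the closure stated in the excerpt: a comb attached to $V(G)$ with spine in $\omega$ has a tail of its spine inside $C$. I would use whichever phrasing is shortest; the separator-chasing version above is self-contained.

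For part (ii): Suppose $\omega$ lies in the closure of two distinct components $C\neq C'$ of $G-X$. I want $\omega\in\Abs{X}$, so let $Y\subseteq V(G)$ be finite; I must show $C(Y,\omega)$ meets $X$. Consider the finite set $Y':=Y\cup (V(C)\cap N(X))$?\,—\,no, that need not be finite. Instead the clean route: the component $C(Y,\omega)$ is connected, and by part~(i)'s reasoning applied to $Y$ we know $C(Y,\omega)$ meets $C$ whenever $\omega\in\Abs C$; likewise it meets $C'$. So $C(Y,\omega)$ contains vertices of both $C$ and $C'$, hence (being connected in $G$, while $C$ and $C'$ lie in different components of $G-X$) it cannot be contained in $G-X$, so it must contain a vertex of $X$. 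That is exactly $C(Y,\omega)\cap X\neq\emptyset$. Since $Y$ was arbitrary, $\omega\in\Abs X$.

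The only subtlety, and the one place to be careful, is the claim used in both parts that "$\omega\in\Abs{M}$ implies $C(Y,\omega)$ meets $M$ for every finite $Y$'' — but this is literally the definition of lying in the closure as given in the excerpt, so there is genuinely no obstacle here; part~(ii) is really just the observation that a connected subgraph of $G$ meeting two different components of $G-X$ must hit $X$. I would write both parts in a few lines each. The main (mild) care point is making sure that when I intersect the chosen components for different finite separators I invoke the consistency of the choices $C(\cdot,\omega)$ — i.e.\ that $C(X_1\cup X_2,\omega)\subseteq C(X_1,\omega)\cap C(X_2,\omega)$ — which is immediate from the definition of an end but worth stating explicitly.
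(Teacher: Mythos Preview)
Your argument is correct. Part~(ii) is identical to the paper's proof. For part~(i) the paper takes a marginally shorter route: rather than assuming $\omega\notin\Abs{X}$ and chasing finite separators, it picks any ray $R\in\omega$ and observes that either $R$ meets $X$ in infinitely many vertices (so $\omega\in\Abs{X}$) or $R$ has a tail avoiding $X$, which then lies in a single component $C$ of $G-X$ (so $\omega\in\Abs{C}$). Your version via the consistency of the components $C(\cdot,\omega)$ is equally valid and only a line longer; both are straight from the definitions, so the difference is cosmetic rather than substantive.
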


\begin{proof}
(i) Let $\omega$ be any end of $G$ and let $R\in \omega$ be any ray. 
Then either the vertex set of $R$ intersects $X$ infinitely often, or $R$ has a tail that is contained in some component $C$ of $G-X$. 
In the first case, $\omega$ is contained in the closure of $X$, and in the second case it is contained in the closure of $C$ in $G$.

(ii) Let $C$ and $C'$ be two distinct components of $G-X$ and suppose that $\omega$ is any end of $G$ that is contained in the closure of both $C$ and $C'$ in $G$. 
If  $S\subseteq V(G)$ is any finite vertex set, then the component $C(S,\omega)$ meets both $C$ and $C'$.  As $X$ separates $C$ and $C'$ in $G$ it follows that $C(S,\omega)$ meets $X$ as well. We conclude that $\omega$ is contained in the closure of $X$ in $G$.  
\end{proof}

\begin{lemma}\label{lemma: IH}
Let $G$ be any connected graph, let $X\subseteq V(G)$ be normally spanned in~$G$ and let $C$ be any component of $G-X$ so that $G[C\cup X]$ is connected. If $C$ has \ntrank\ $\xi$ in~$G$, then $G[C\cup X]$ has \ntrank\ at most~$\xi$. 
\end{lemma}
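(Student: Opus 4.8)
The plan is to prove the slightly stronger assertion that $G[C\cup X]$ has normal rank at most~$\xi$ \emph{in~$G$}, and then to derive the stated conclusion from it. Since $G[C\cup X]$ is connected by hypothesis, the ``in particular'' part of Lemma~\ref{lemma:ntrankOfSubgraphs}(ii), applied with $G[C\cup X]$ playing the role of~$H$, upgrades ``normal rank at most~$\xi$ in~$G$'' to ``normal rank at most~$\xi$''. The one idea that makes the stronger assertion work is that $X$ itself belongs to the ideal $\cI(G)$, so $X$ may be adjoined to any normally spanned witness set at no cost.

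To prove that $G[C\cup X]$ has normal rank at most~$\xi$ in~$G$, I would distinguish two cases. If $\xi=0$, then $V(C)$ is normally spanned in~$G$; since $X$ is normally spanned in~$G$ as well and $\cI(G)$ is an ideal, the union $V(C)\cup X=V(G[C\cup X])$ is normally spanned in~$G$, and hence $G[C\cup X]$ has normal rank~$0$ in~$G$. If $\xi>0$, then by definition of $C$ having normal rank~$\xi$ in~$G$ there is a vertex set $Y\subseteq V(C)$ that is normally spanned in~$G$ and such that every component of $C-Y$ has normal rank less than~$\xi$ in~$G$; I would put $Z:=X\cup Y$, which is again normally spanned in~$G$. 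The routine but crucial observation is that $G[C\cup X]-Z=C-Y$: because $C$ is a component of $G-X$ we have $V(C)\cap X=\emptyset$, so deleting $Z$ from the induced subgraph $G[C\cup X]$ leaves exactly the vertices $V(C)\setminus Y$ together with all edges of $G$ between them, which is precisely the graph $C-Y$. Consequently every component of $G[C\cup X]-Z$ has normal rank less than~$\xi$ in~$G$, so $Z$ witnesses that $G[C\cup X]$ has normal rank at most~$\xi$ in~$G$.

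I do not expect a real obstacle here; in particular, no transfinite induction is needed, as the single witness $Z=X\cup Y$ does all the work (in contrast with Lemma~\ref{lemma:ntrankOfSubgraphs}). The points that need care are purely bookkeeping: one must keep ``normal rank in~$G$'' and ``normal rank in~$G[C\cup X]$'' apart and invoke Lemma~\ref{lemma:ntrankOfSubgraphs}(ii) at exactly the moment when its connectedness hypothesis is available, which here is guaranteed because connectedness of $G[C\cup X]$ is part of the statement of the present lemma; and one must verify the identity $G[C\cup X]-Z=C-Y$, which rests entirely on $C$ being a component of $G-X$.
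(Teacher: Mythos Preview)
Your proposal is correct and follows essentially the same approach as the paper: show that $G[C\cup X]$ has normal rank at most~$\xi$ in~$G$ using the witness $X\cup Y$, then apply Lemma~\ref{lemma:ntrankOfSubgraphs}(ii) to drop the ``in~$G$''. The only cosmetic differences are that the paper handles the case $\xi=0$ directly via Jung's theorem (obtaining a normal spanning tree of $G[C\cup X]$) rather than passing through ``rank~$0$ in~$G$'', and that you spell out the identity $G[C\cup X]-Z=C-Y$ which the paper leaves implicit.
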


\begin{proof}
Suppose that $C$ is a component of $G-X$ that has \ntrank\ $\xi$ in $G$. 
If~$\xi=0$, then $V(C)$ is normally spanned in $G$ and $G[C\cup X]$ has a normal spanning tree by Jung's Theorem~\ref{thm: jung normal tree char}, so $G[C\cup X]$ has \ntrank\ $0$ as desired.
Otherwise there is a vertex set $Y\subseteq V(C)$ that is normally spanned in $G$ and satisfies that every component of $C-Y$ has \ntrank\ less than~$\xi$ in $G$. Note that $X\cup Y$ is normally spanned in $G$ by Jung's Theorem~\ref{thm: jung normal tree char}. Therefore $X\cup Y$ witnesses that $G[C\cup X]$ has \ntrank\ at most~$\xi$ in $G$.
Finally, Lemma~\ref{lemma:ntrankOfSubgraphs}~(ii) implies that $G[C\cup X]$ has \ntrank\ at most~$\xi$.
\end{proof}

\begin{customthm}{1}\label{thm: ntrank implies endfaithful st}
Every \ntr\ graph has an end-faithful spanning tree.
\end{customthm}

\begin{proof} By Theorem~\ref{main result: rayless tree} we may prove the statement via induction on the \ntrank\ of $G$. If $G$ has \ntrank\ $0$, then it has a normal spanning tree, and normal spanning trees are end-faithful.
For the induction step suppose that $G$ has \ntrank\ $\alpha>0$, and let $X\subset V(G)$ be any vertex set that is normally spanned in $G$ and satisfies that every component of $G-X$ has \ntrank\ less than~$\alpha$ in~$G$.
By replacing $X$ with the vertex set of any normal tree in $G$ that contains $X$, we may assume that $X$ is the vertex set of a normal tree $\nt\subset G$; indeed, every component of $G-X$ still has \ntrank\ less than~$\alpha$ in $G$ by Lemma~\ref{lemma:ntrankOfSubgraphs}~(i). 
 Note that, by Lemma~\ref{lemma: normal tree reflects the ends}, the tree $\nt$ reflects the ends of $G$ in the closure of $X$.

By Lemma~\ref{lemma: ends are contained in closure of X or C}~(i), every end of $G$ is contained in the closure of $X$ in $G$ or in the closure of some component of $G-X$. And by Lemma~\ref{lemma: ends are contained in closure of X or C}~(ii), every end of $G$ that is contained in the closure of two distinct components of $G-X$ in $G$ is also contained in the closure of $X$ in $G$. 
Thus, by Lemma~\ref{lemma: reflecting forest} it suffices to find in each component $C$ of $G-X$ a spanning forest $H_C$ so that every component of $H_C$ sends an edge in $G$ to $\nt$ and so that $H_C$ reflects $\Abs{C}\setminus\Abs{X}$.

For this, consider any component $C$ of $G-X$. 
Let $P$ be the (possibly one-way infinite) path in $\nt$ that is formed by the down-closure of $N(C)$ in $\nt$.
Then by Lemma~\ref{lemma: IH} the graph $G[C\cup P]$ has \ntrank\ less than~$\alpha$, and therefore satisfies the induction hypothesis. Hence we find an end-faithful spanning tree $T_C$ of $G[C\cup P]$.
By Lemma~\ref{lemma: endfaithful and complete countable set} we may assume that the path $P$ is a subgraph of $T_C$ if this path is a ray. 
It is straightforward to check that $H_C:= T_C-X$ is as desired.
\end{proof}

\section{Rayless spanning trees}\label{section: rayless spanning trees}

\noindent In this section we prove that for every \ntr\ graph $G$, having a rayless spanning tree is equivalent to all the ends of $G$ being dominated.
Our proof builds on the following theorem from the third paper of the star-comb series~\cite{StarComb1StarsAndCombs,StarComb2TheDominatedComb,StarComb3TheUndominatedComb,StarComb4TheUndominatingStar} that hides a six page argument:

\begin{theorem}[{\cite[Theorem~1]{StarComb3TheUndominatedComb}}]\label{thm:raylessSTinNSG}
Let $G$ be any graph and let $U\subset V(G)$ be normally spanned.
Then there is a rayless tree $T\subset G$ that includes~$U$ if and only if all the ends of $G$ in the closure of $U$ are dominated~in~$G$.
\end{theorem}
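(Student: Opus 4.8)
The plan is to prove the two implications separately. The direction ``a rayless tree including $U$ exists $\Rightarrow$ every end in $\Abs U$ is dominated'' is the routine one, and I would dispatch it first; the converse is where the real work lies.

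\textbf{Necessity.} Suppose $T\subseteq G$ is a rayless tree with $U\subseteq V(T)$, and fix an end $\omega\in\Abs U$. By the comb characterisation of closures, $G$ contains a comb attached to $U$ with spine $R\in\omega$; its teeth form an infinite set $W\subseteq U\subseteq V(T)$. Root $T$ and pass to the down-closure $\dc{W}_T$, a rayless subtree containing $W$. Call a vertex $x$ of $\dc{W}_T$ \emph{heavy} if $x\le w$ for infinitely many teeth $w\in W$. The heavy vertices are down-closed, and every chain of heavy vertices is finite, since an infinite such chain would be a rooted ray of the rayless tree $\dc{W}_T$. Hence there is a maximal heavy vertex $v$. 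By maximality each child of $v$ lies below only finitely many teeth, so infinitely many children of $v$ each lie below at least one tooth; following $T$ upwards from these children yields an infinite $v$--$W$ fan in $T$, with paths meeting only in $v$. Concatenating these with the pairwise disjoint teeth-to-spine paths of the comb gives infinitely many $v$--$R$ walks; since each vertex other than $v$ lies on at most two of them, no finite vertex set avoiding $v$ separates $v$ from $R$, so $v$ dominates $\omega$, as required.

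\textbf{Sufficiency: the setup.} Now assume every end in $\Abs U$ is dominated. Since $U$ is normally spanned, Jung's Theorem~\ref{thm: jung normal tree char} provides a normal tree $S_0\subseteq G$ with $U\subseteq V(S_0)$; replacing $S_0$ by the down-closure $\dc{U}_{S_0}$ yields a normal tree $S$ that contains $U$ cofinally, so $\Abs S=\Abs U$ by Lemma~\ref{lemma: closure and tree containing U cofinally}. It therefore suffices to build inside $G$ a rayless tree including $U$, using $S$ as a scaffold and knowing that \emph{all} ends of $S$ are dominated. The base case is immediate: if $U$ is dispersed then $\Abs S=\Abs U=\emptyset$, so $S$ has no end and is already a rayless tree including $U$.

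\textbf{Sufficiency: the construction and the main obstacle.} In general $S$ carries a ray $R_\omega$ for each end $\omega\in\Abs S$, and by hypothesis $R_\omega$ is dominated by some vertex $d_\omega$ admitting an infinite $d_\omega$--$R_\omega$ fan in $G$. The guiding idea is to imitate the elementary model ``a ray plus a dominating vertex'', whose spanning star is rayless, by \emph{folding} each dominated ray onto its dominating vertex: cut $R_\omega$ at the attachment vertices of the fan and reattach the resulting segments to $d_\omega$, so that the branch no longer escapes to infinity but hangs off $d_\omega$ as a rayless, star-like subtree. I expect the main difficulty to be performing these surgeries \emph{coherently} over the entire branching structure of $S$: distinct ends share initial segments of their rays, a dominating vertex for one end may sit deep inside the territory of another, and folding infinitely many rays at once can reintroduce new rays, break connectivity, or discard vertices of $U$. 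The heart of the matter is thus an organised transfinite recursion that handles one ``outermost'' dominated end at a time, pushes the residual problem into strictly smaller subtrees of $S$, and checks at limit stages that the emerging object is a single tree still including $U$. The two facts that keep this under control are the fan lemma extracted in the necessity proof (every infinite vertex set of a rayless tree carries a dominating fan, used to certify that the pieces produced are genuinely rayless) together with a limit argument ensuring that no ray survives the recursion; making these precise is exactly the content of the six-page argument, and I would treat it as the crux of the whole proof.
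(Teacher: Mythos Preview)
The paper does not prove this theorem at all: it is quoted verbatim from \cite{StarComb3TheUndominatedComb} and explicitly flagged as ``hiding a six page argument'', so there is no in-paper proof to compare against. I can therefore only comment on the merits of your sketch.

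Your necessity argument is correct and is the standard one: the maximal-heavy-vertex trick in a rayless tree produces an infinite fan to the teeth, and concatenating with the comb paths shows the end is dominated.

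For sufficiency you give a plan rather than a proof, and you are candid about this. The scaffold step (pass to a normal tree $S$ containing $U$ cofinally, so that $\Abs S=\Abs U$ and every end of $S$ is dominated) is exactly right and is how the argument in \cite{StarComb3TheUndominatedComb} begins. What is less convincing is the organising principle you propose for the recursion: ``handle one outermost dominated end at a time'' does not parse, since the ends of a normal tree carry no well-order and no end is outermost. The actual argument does not peel off ends one by one; it works levelwise on $S$, building a rayless tree by transfinite recursion on the height of $S$ while simultaneously maintaining, for each surviving normal ray, a dominating vertex together with an infinite fan whose attachment points are pushed ever higher. The delicate point you correctly anticipate---that folding can create new rays or destroy connectivity---is controlled not by treating ends individually but by a careful bookkeeping of which finite segments have already been rerouted and which fans remain available above the current level. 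So your intuition about the mechanism (fold dominated rays onto their dominators) is sound, but the control structure you suggest would not terminate as stated; the missing idea is to recurse on the \emph{height} of the normal tree rather than on its set of ends.
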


\begin{customthm}{2}\label{thm: normally traceable rayless}
For every \ntr\ graph $G$, having a rayless spanning tree is equivalent to all the ends of $G$ being dominated.
\end{customthm}

\begin{proof}
Let $G$ be any \ntr\ graph.
The forward implication is clear.
By Theorem~\ref{main result: rayless tree} we may prove the backward implication via induction on the \ntrank\ of~$G$. 
For this, we suppose that every end of $G$ is dominated.
If $G$ has \ntrank\ $0$, then it is normally spanned. 
Thus, by Theorem~\ref{thm:raylessSTinNSG}, the graph $G$ has a rayless spanning tree.
For the induction step suppose that $G$ has \ntrank\ $\alpha>0$, and let $X\subset V(G)$ be any vertex set that is normally spanned in $G$ and satisfies that every component of $G-X$ has \ntrank\ less than~$\alpha$ in $G$.
By replacing $X$ with any normal tree in $G$ that contains $X$, we may assume that $X$ is the vertex set of a normal tree $\nt\subset G$; indeed, every component of $G-X$ still has \ntrank\ less than~$\alpha$ in $G$ by Lemma~\ref{lemma:ntrankOfSubgraphs}~(i). 

We claim that it suffices to find in every component $C$ of $G-X$ a rayless spanning forest $H_C$ such that every component of $H_C$ sends an edge in $G$ to~$X$.
This can be seen as follows.
Suppose that we find such a rayless spanning forest $H_C$ in every component $C$ of $G-X$.
By Theorem~\ref{thm:raylessSTinNSG} we find a rayless tree $\rlt\subset G$ that contains $X=V(\nt)$.
Then we set $H_D':=H_C\cap D$ for every component $D$ of $G-\rlt$ and the component $C$ of $G-X$ containing it.
Now consider the spanning forest $H$ of $G$ that is the union of all forests $H_D'$ with the tree~$\rlt$.
Then a rayless spanning tree of $G$ arises from $H$ by Lemma~\ref{lemma: reflecting forest}.

To complete the proof, we show that every component $C$ of $G-X$ has a rayless spanning forest $H_C$.
So let $C$ be any component of $G-X$.
If the neighbourhood $N(C)\subset\nt$ is finite, then we let $P$ be the path in $\nt$ formed by the down-closure of $N(C)$ in~$\nt$.
Otherwise, we let $P$ be the union of the ray $R$ formed by the down-closure of $N(C)$ in~$\nt$ with a star in $G$ \at~$R$.
We claim that every end of the graph $G[C\cup P]$ is dominated.
This is clear if $N(C)$ is finite, hence we may assume that $P$ is the union of $R$ with a star attached to~$R$.
Now let $S$ be any ray in $G[C\cup P]$.
By the choice of~$P$, we may assume that $S$ is inequivalent to~$R$ in $G[C\cup P]$.
Hence there exists a finite set $Y$ of vertices of $G[C\cup P]$ that separates $S$ and~$R$.
Since $R$ contains the entire neighbourhood of $C$ in~$G$, the two rays $S$ and $R$ are also separated by~$Y$ in~$G$.
In particular, the component $K$ of $G-Y$ that contains a tail of~$S$ is included in~$C$.
The ray $S$ is dominated in $G$ by some vertex $d$, hence $d$ is the centre vertex of a star in~$G$ attached to~$S$.
Without loss of generality, the paths that form this star have no inner vertices in the finite vertex set~$Y$.
Then this star is contained in $G[K\cup Y]\subset G[C\cup P]$ as desired.
Thus, by
Lemma~\ref{lemma: IH}, each $G[C\cup P]$ satisfies the induction hypothesis. 
Hence we find a rayless spanning tree $T_C$ of $G[C\cup P]$, and
$H_C:= T_C-X$ is as desired.
\end{proof}

\section*{Outlook}

\noindent 
Every complete graph has both an end-faithful spanning tree and a rayless spanning tree.
All the countable complete graphs are \ntr , but we claim that no uncountable complete graph is \ntr . 
Otherwise it would have a \ntrank\ $\alpha$ by Theorem~\ref{main result: rayless tree}.
A set of vertices in a complete graph is normally spanned if and only if it is countable.
Hence deleting any \ntrank\ reducing normally spanned set of vertices from an uncountable complete graph leaves precisely one component that is a copy of the original uncountable complete graph.
But this component has \ntrank\ $\alpha$ as well, a contradiction.

\begin{problem}
Can the \ntrank\ be generalised so that every connected graph has an end-faithful spanning tree if and only if it has a generalised \ntrank ?
\end{problem}

\begin{problem}
Can the \ntrank\ be generalised so that every connected graph has a rayless spanning tree if and only if all its ends are dominated and it has a generalised \ntrank ?
\end{problem}

\bibliographystyle{amsplain}
\bibliography{BIB}
\end{document}